\numberwithin{equation}{section}
\newcommand{\R}{\mathbb{R}}
\newcommand{\T}{\mathbb{T}}
\newcommand{\N}{\mathbb{N}}
\newcommand{\PP}{\mathbb{P}}
\newcommand{\E}{\mathbb{E}}
\newcommand{\MI}{M^{\!-\! 1}\!I(n)}
\newcommand{\eps}{\varepsilon}
\newcommand{\LL}{L^2_{F^{-1}}}
\newcommand{\dd}{\mathrm{d}}
\newcommand{\fe}{f^{\eps}}
\newcommand{\gep}{g^{\eps}}
\newcommand{\re}{\rho^{\eps}}
\newcommand{\li}[1]{\overline{#1}}
\newtheorem{theorem}{Theorem}[section]
\newtheorem{lemma}[theorem]{Lemma}
\newtheorem{prop}[theorem]{Proposition}
\newtheorem{definition}{Definition}[section]
\newenvironment{remark}[1][Remark]{\begin{trivlist}
\item[\hskip \labelsep {\bfseries #1}]}{\end{trivlist}}
\begin{document}
\begin{center}
\textbf{DIFFUSION LIMIT FOR THE RADIATIVE TRANSFER EQUATION PERTURBED BY A MARKOVIAN PROCESS}
\end{center}

\vspace{0.2cm}
\begin{center}
\small \sc{A. Debussche\footnote[1]{\label{ref}IRMAR, ENS Rennes, CNRS, UEB. av Robert Schuman, F-35170 Bruz, France. Email: arnaud.debussche@ens-rennes.fr; sylvain.demoor@ens-rennes.fr}, S. De Moor\textsuperscript{\ref{ref}} and J. Vovelle\footnote[2]{Universit\'e de Lyon ; CNRS ; Universit\'e Lyon 1, Institut Camille Jordan, 43 boulevard du 11 novembre 1918, F-69622 Villeurbane Cedex, France. Email: vovelle@math.univ-lyon1.fr}}
\end{center}

\vspace{0.2cm}

\begin{abstract}
\footnotesize We study the stochastic diffusive limit of a kinetic radiative transfer equation, which is non linear, involving a small parameter and perturbed by a smooth random term. Under an appropriate scaling for the small parameter, using a generalization of the perturbed test-functions method, we show the convergence in law to a stochastic non linear fluid limit.\\

\noindent \textbf{Keywords}: Kinetic equations, non-linear, diffusion limit, stochastic partial differential equations, perturbed test functions, Rosseland approximation, radiative transfer.
\end{abstract}

\normalsize

\section{Introduction}
In this paper, we are interested in the following non-linear equation
\begin{equation}\label{rt}
\left\{
\begin{aligned}
& \;\; \partial_t \fe\ +\ \frac{1}{\eps}a(v)\cdot\nabla_x \fe\ =\ \frac{1}{\eps^2}\sigma(\li{\fe})L(\fe)\ +\ \frac{1}{\eps}\fe m^{\eps}, \\ 
& \;\; \fe(0)= \fe_0, \qquad t\in[0,T],\, x\in \T^N,\,v\in V.
\end{aligned} 
\right.
\end{equation}
where $(V,\mu)$ is a measured space, $a:V\to \R^N$, $\sigma:\R\to\R$ . The notation $\li{f}$ stands for the average over the velocity space $V$ of the function $f$, that is 
$$\li{f}=\int_V f\,\dd \mu (v).$$
The operator $L$ is a linear operator of relaxation which acts on the velocity variable $v\in V$ only. It is given by
\begin{equation}\label{defL}
L(f):=\li{f}F-f,
\end{equation}
where $v \mapsto F(v)$ is a velocity equilibrium function such that
\begin{equation}\label{propF}
F>0 \text{ a.s.},\quad \li{F}=1,\quad \sup\limits_{v\in V} F(v) < \infty.
\end{equation}
The term $m^{\eps}$ is a random process depending on $(t,x)\in\R^+\times \R^N$ (see section \ref{sectionmeps}). The precise description of the problem setting will be given in the next section. In this paper, we study the behaviour in the limit $\eps\to 0$ of the solution $\fe$ of $(\ref{rt})$. \medskip

\noindent Concerning the physical background in the deterministic case ($m^{\eps}\equiv 0$), equation \eqref{rt} describes the interaction between a surrounding continuous medium and a flux of photons radiating through it in the absence of hydrodynamical motion. The unknown $f^{\eps}(t,x,v)$ then stands for a distribution function of photons having position $x$ and velocity $v$ at time $t$. The function $\sigma$ is the opacity of the matter. When the surrounding medium becomes very large compared to the mean free paths $\eps$ of photons, the solution $\fe$ to \eqref{rt} is known to behave like $\rho F$ where $\rho$ is the solution of the Rosseland equation 
$$
\partial_t \rho - \mathrm{div}_x(\sigma(\rho)^{-1}K\nabla_x\rho)\ =\ 0, \qquad (t,x)\in  [0,T] \times \T^N,
$$
and $F$ is the velocity equilibrium defined above. This is what we call the Rosseland approximation. In this paper, we investigate such an approximation where we have perturbed the deterministic equation by a smooth multiplicative random noise. To do so, we use the method of perturbed test-functions. This method provides an elegant way of deriving stochastic diffusive limit from random kinetic systems; it was first introduced by Papanicolaou, Stroock and Varadhan \cite{psv}. The book of Fouque, Garnier, Papanicolaou and Solna \cite{fgps} presents many applications to this method. A generalization in infinite dimension of the perturbed test-functions method arose in recent papers of Debussche and Vovelle \cite{arnaudjulien} and de Bouard and Gazeau \cite{debouard}.  \medskip

\noindent In the deterministic case (that is when $m^{\eps}\equiv 0$), the Rosseland approximation has been widely studied. In the paper of Bardos, Golse and Perthame \cite{bardos87}, they derive the Rosseland approximation on a slightly more general equation of radiative transfer type than \eqref{rt} where the solution also depends on the frequency variable $\nu$. Using the so-called Hilbert's expansion method, they prove a strong convergence of the solution of the radiative transfer equation to the solution of the Rosseland equation. In \cite{bardos88}, the Rosseland approximation is proved in a weaker sense with weakened hypothesis on the various parameters of the radiative transfer equation, in particular on the opacity function $\sigma$. \medskip

\noindent In the stochastic setting, the case where $\sigma\equiv\sigma_0$ is constant has been studied in the paper of Debussche and Vovelle \cite{arnaudjulien} where they prove the convergence in law of the solution of \eqref{rt} to a limit stochastic fluid equation by mean of a generalization of the perturbed test-functions method. Thus the radiative transfer equation \eqref{rt} is a first step in studying approximation diffusion on non-linear stochastic kinetic equations since the operator $\sigma(\li{f})Lf$ stands for a simple non-linear perturbation of the classical linear relaxation operator $L$. \medskip

\noindent As expected, we have to handle some difficulties caused by this non-linearity. In the paper of Debussche and Vovelle \cite{arnaudjulien} is proved the tightness of the family of processes $(\re)_{\eps>0}$ in the space of time-continuous function with values in some negative Sobolev space $H^{-\eta}(\T^N)$. In our non-linear setting, this is not any more sufficient to succeed in passing to the limit as $\eps$ goes to $0$. As a consequence, the main step to overcome this difficulty is to prove the tightness of the family of processes $(\re)_{\eps>0}$ in the space $L^2(0,T;L^2(\T^N))$. This is made using averaging lemmas in the $L^2$ setting with a slight adaptation to our stochastic context. The main results about deterministic averaging lemmas that we will use in the sequel can be found in the paper of Jabin \cite{jabin}. We point out that, thanks to this additional tightness result, we could handle the case of a more general and non-linear noise term in \eqref{rt} of the form $\frac{1}{\eps}m^{\eps}\lambda(\li{\fe})\fe$ where $\lambda:\R\to\R$ is a bounded and continuous function. In particular, this remains valid in the linear case $\sigma\equiv 1$ studied in the paper \cite{arnaudjulien} of Debussche and Vovelle so that this paper can provide some improvements to their result. \medskip

\noindent {\em Aknowledgements:} This work is partially supported by the french government thanks to the ANR program Stosymap. It also benefit from the support of the french government ``Investissements d'Avenir'' program ANR-11-LABX-0020-01.

\section{Preliminaries and main result}
\subsection{Notations and hypothesis}\label{sec:nothyp}
Let us now introduce the precise setting of equation \eqref{rt}. We work on a finite-time interval $[0,T]$ where $T>0$ and consider periodic boundary conditions for the space variable: $x\in\T^N$ where $\T^N$ is the $N$-dimensional torus. Regarding the velocity space $V$, we assume that $(V,\mu)$ is a measured space. \medskip

\noindent In the sequel, $\LL$ denotes the $F^{-1}$ weighted $L^2(\T^N\times V)$ space equipped with the norm  
$$\|f\|^2:=\int_{\T^N}\!\int_{V}\frac{|f(x,v)|^2}{F(v)}\,\dd \mu(v) \dd x.$$
We denote its scalar product by $(.,.)$. We also need to work in the space $L^2(\T^N)$, which will be often written $L^2$ for short when the context is clear. In what follows, we will often use the inequality  
$$\|\li{f}\|_{L^2_x}\leq\|f\|,$$
which is just Cauchy-Schwarz inequality and the fact that $\li{F}=1$.  We also introduce the Sobolev spaces on the torus $H^{\gamma}(\T^N)$, or $H^{\gamma}$ for short. For $\gamma\in\N$, they consist of periodic functions which are in $L^2(\T^N)$ as well as their derivatives up to order $\gamma$. For general $\gamma\geq 0$, they are easily defined by Fourier series. For $\gamma<0$, $H^{\gamma}(\T^N)$ is the dual of $H^{-\gamma}(\T^N)$.\medskip

\noindent Concerning the velocity mapping $a:V\to \R^N$, we shall assume that it is bounded, that is
\begin{equation}\label{abound}
\sup\limits_{v\in V}|a(v)|<\infty.
\end{equation}
Furthermore, we suppose that the following null flux hypothesis holds
\begin{equation}\label{nullflux}
\int_Va(v)F(v)\,\dd \mu(v)=0,
\end{equation}
and that the following matrix
$$K:=\int_V a(v)\otimes a(v)F(v)\,\dd \mu(v)$$
is definite positive. Finally, to obtain some compactness in the space variable by means of averaging lemmas, we also assume the following standard condition:
\begin{equation}\label{nondegenlemmemoy}
\forall \eps >0,\, \forall (\xi,\alpha)\in  S^{N-1} \!\!\times \R,\;\; \mu\left(\{v\in V, |a(v)\cdot\xi + \alpha|<\eps\}\right)\leq \eps^{\theta},
\end{equation}
for some $\theta\in (0,1]$. \medskip

\noindent Let us now give several hypothesis on the opacity function $\sigma:\R\to\R$. We assume that
\begin{enumerate}
\item[ (H1)] There exist two positive constants $\sigma_*$, $\sigma^* >0$ such that for almost all $x\in\R$, we have 
$$\sigma_*\leq \sigma(x)\leq \sigma^*;$$ 
\item[ (H2)] the function $\sigma$ is Lipschitz continuous.
\end{enumerate}

\noindent Similarly as in the deterministic case, we expect with $(\ref{rt})$ that $\sigma(\li{\fe}) L(\fe)$ tends to zero with $\eps$, so that we should determine the equilibrium of the operator $\sigma(\li{\cdot}) L(\cdot)$. In this case, since $\sigma>0$, they are clearly constituted by the functions of the form $\rho F$ with $\rho$ being independent of $v\in V$. Note that it can easily be seen that $\sigma(\li{\cdot}) L(\cdot)$ is a bounded operator from $\LL$ to $\LL$ and that it is dissipative; precisely, for $f\in \LL $,
\begin{equation}\label{dissip}
(\sigma(\li{f})Lf,f)=-\|\sigma^{\frac{1}{2}}(\li{f})Lf\|^2\leq 0.
\end{equation}
In the sequel, we denote by $g(t,\cdot)$ the semi-group generated by the operator $\sigma(\li{\cdot}) L(\cdot)$ on $\LL$. It verifies, for $f\in\LL$,
$$
\left\{
\begin{aligned}
\frac{d}{dt}g(t,f)&=\sigma(\li{g(t,f)})Lg(t,f), \\
g(0,f)&=f,
\end{aligned}
\right.
$$
and we can show that it is given by
$$g(t,f)=\li{f}F+(f-\li{f}F)e^{-t\sigma(\li{f})},\quad t\geq 0,\; f\in\LL.$$
With the hypothesis (H1) made on $\sigma$, we deduce the following relaxation property of the operator $\sigma(\li{\cdot}) L(\cdot)$
\begin{equation}\label{relaxation}
g(t,f)\longrightarrow \li{f}F,\quad t\to\infty,\qquad \text{ in }\LL.
\end{equation}

\subsection{The random perturbation}\label{sectionmeps}

The random term $m^{\eps}$ is defined by 
$$m^{\eps}(t,x):=m\left(\frac{t}{\eps^2},x\right),$$ 
where $m$ is a stationary process on a probability space $(\Omega,\mathcal{F},\PP)$ and is adapted to a filtration $(\mathcal{F}_t)_{t\geq 0}$. Note that $m^{\eps}$ is adapted to the filtration $(\mathcal{F}^{\eps}_t)_{t\geq 0}=(\mathcal{F}_{\eps^{-2}t})_{t\geq 0}.$\medskip

\noindent We assume that, considered as a random process with values in a space of spatially dependent functions, $m$ is a stationary homogeneous Markov process taking values in a subset $E$ of $W^{1,\infty}(\T^N)$. In the sequel, $E$ will be endowed with the norm $\|\cdot\|_{\infty}$ of $L^{\infty}(\T^N)$. Besides, we denote by $\mathcal{B}(E)$ the set of bounded functions from $E$ to $\R$ endowed with the norm $\|g\|_{\infty}:=\sup_{n\in E}|g(n)|$ for $g\in\mathcal{B}(E)$.  \medskip

\noindent We assume that $m$ is stochastically continuous. Note that $m$ is supposed not to depend on the variable $v$. For all $t\geq 0$, the law $\nu$ of $m_t$ is supposed to be centered
$$\E m_t=\int_E n\,\dd \nu(n)=0.$$
We denote by $e^{tM}$ a transition semi-group on $E$ associated to $m$ and by $M$ its infinitesimal generator. $\mathrm{D}(M)$ stands for the domain of $M$; it is defined as follows:
$$\mathrm{D}(M):=\left\{u\in \mathcal{B}(E), \,\lim\limits_{h\to 0}\frac{e^{hM}-I}{h}u \textrm{ exists in } \mathcal{B}(E)\right\},$$
and if $u\in\mathrm{D}(M)$, we have
$$Mu:=\lim\limits_{h\to 0}\frac{e^{hM}-I}{h}u \textrm{ in } \mathcal{B}(E).$$
Moreover, we suppose that $m$ is ergodic and satisfies some mixing properties in the sense that there exists a subspace $\mathscr{P}_M$ of $\mathcal{B}(E)$ such that for any $g\in\mathscr{P}_M$, the Poisson equation
$$M\psi=g-\int_Eg(n)\,\dd \nu(n)=:\widehat{g},$$
has a unique solution $\psi\in\mathrm{D}(M)$ satisfying $\int_E\psi(n)\,d\nu(n)=0$. We denote by $M^{-1}\widehat{g}$ 
this unique solution, and assume that it is given by
\begin{equation}\label{inversedeM}
M^{-1}\widehat{g}(n)= -\int_0^{\infty}e^{tM}\widehat{g}(n)\dd t,\quad n\in E.
\end{equation}
In particular, we suppose that the above integral is well defined. We need that $\mathscr{P}_M$ contains sufficiently many functions. Thus we assume that for all $f,g \in\LL$, we have
\begin{equation}\label{psi1}
\psi^{(1)}_{f,g}:n\mapsto(fn,g) \in \mathscr{P}_M,
\end{equation}
and we then define $M^{\!-\! 1}\!I$ from $E$ into $W^{1,\infty}(\T^N)$ by
\begin{equation}\label{MI}
(f\MI,g):=M^{\!-\! 1}\psi^{(1)}_{f,g}(n),\quad \forall f,g\in\LL.
\end{equation}
Then, we also suppose that for all $f,g,h \in\LL$ and all continuous operator $B$ from $\LL$ to the space of the continuous bilinear operators on $\LL\times\LL$,
\begin{equation}\label{psi23}
\psi^{(2)}_{f,g}:n\mapsto(fn\MI,g),\quad \psi^{(3)}_{B,f,g,h}:n\mapsto B(f)(gn,h\MI) \in \mathscr{P}_M.
\end{equation}
We need a uniform bound in $W^{1,\infty}(\T^N)$ of all the functions of the variable $n\in E$ introduced above. Namely, we assume, for all $f,g \in\LL$ and all continuous operator $B$ on $\LL$,
\begin{equation}\label{mbound}
\begin{array}{ll}
\|n\|_{W^{1,\infty}(\T^N)} \leq C_*, & \|\MI\|_{W^{1,\infty}(\T^N)} \leq C_*,\\
|M^{\!-\! 1}\!\psi_{f,g}^{(2)}|\leq C_*\|f\|\|g\|, & |M^{\!-\! 1}\!\psi_{B,f,g}^{(3)}|\leq C_*\|B(f)\|\|f\|\|g\|.
\end{array}
\end{equation}
Finally, we suppose that for all $f,g\in\LL$, 
\begin{equation}\label{MIcarre}
n\mapsto (f\MI,g)^2 \in \mathrm{D}(M) \text{ with }|M[(f\MI,g)^2]|\leq C_*\|f\|^2\|g\|^2.
\end{equation}
\noindent To describe the limiting stochastic partial differential equation, we then set 
$$k(x,y)=\E\int_{\R}m_0(y)m_t(x)\,dt,\quad x,y\in\T^N.$$
We can easily show that the kernel $k$ belong to $L^{\infty}(\T^N\times\T^N)$ and, $m$ being stationary, that it is symmetric (see \cite{arnaudjulien}). As a result, we introduce the operator $Q$ on $L^2(\T^N)$ associated to the kernel $k$ 
$$Qf(x)=\int_{\T^N}k(x,y)f(y)\,\dd y,$$
which is self-adjoint, compact and non-negative (see \cite{arnaudjulien}). As a consequence, we can define the square root $Q^{\frac{1}{2}}$ which is Hilbert-Schmidt on $L^2(\T^N)$.
\begin{remark} The above assumptions on the process $m$ are verified, for instance, when $m$ is a Poisson process taking values in a bounded subset $E$ of $W^{1,\infty}(\T^N)$.
\end{remark} 

\subsection{Resolution of the kinetic equation}
In this section, we solve the linear evolution problem \eqref{rt} thanks to a semi-group approach. We thus introduce the linear operator $A:=a(v)\cdot\nabla_x$ on $\LL$ with domain $$\mathrm{D}(A):=\{f\in \LL, \nabla_xf\in \LL \}.$$
The operator $A$ has dense domain and, since it is skew-adjoint, it is $m$-dissipative. Consequently $A$ generates a contraction semigroup $(\mathcal{T}(t))_{t\geq 0}$ (see \cite{cazhar}). We recall that $\mathrm{D}(A)$ is endowed with the norm $\|\cdot\|_{\mathrm{D}(A)}:=\|\cdot\|+\|A\cdot\|$, and that it is a Banach space.

\begin{prop}\label{solkinetic}Let $T>0$ and $f_0^{\eps}\in L^2_{F^{-1}}$. Then there exists a unique mild solution of \eqref{rt}  on $[0,T]$ in $L^{\infty}(\Omega)$, that is there exists a unique $\fe\in L^{\infty}(\Omega,C([0,T],\LL))$ such that $\PP-$a.s. 
$$\fe_t=\mathcal{T}\left(\frac{t}{\eps}\right)\fe_0+\int_0^t\mathcal{T}\left(\frac{t-s}{\eps}\right)\left(\frac{1}{\eps^2}\sigma(\li{\fe_s})L\fe_s+\frac{1}{\eps}m^{\eps}_s \fe_s \right)\,ds,\quad t\in[0,T].$$
Assume further that $\fe_0\in \mathrm{D}(A)$, then there exists a unique strong solution $\fe$ which belongs to the spaces $ L^{\infty}(\Omega,C^1([0,T],\LL))$ and $L^{\infty}(\Omega,C([0,T],\mathrm{D}(A)))$ of \eqref{rt}.
\end{prop}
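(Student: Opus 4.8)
The plan is to treat \eqref{rt} pathwise: for $\PP$-almost every fixed $\omega\in\Omega$, the process $m^\eps$ becomes a deterministic coefficient, bounded uniformly by the first estimate in \eqref{mbound}, so that $f\mapsto \frac{1}{\eps} m^\eps_s f$ is a bounded linear operator on $\LL$ of norm $\le C_*/\eps$, measurable in $s$. Then \eqref{rt} reads as a semilinear Cauchy problem $\partial_t\fe = -\frac{1}{\eps} A\fe + N_s(\fe)$ on $\LL$, where $-\frac{1}{\eps} A$ generates the contraction semigroup $\mathcal{T}(\cdot/\eps)$ and $N_s(f):=\frac{1}{\eps^2}\sigma(\li{f})Lf+\frac{1}{\eps} m^\eps_s f$. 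The whole difficulty is concentrated in the relaxation term, since $L$ is bounded on $\LL$ (indeed $\|Lf\|\le 2\|f\|$, using $\|\li{f}\|_{L^2_x}\le\|f\|$) and $\sigma$ is bounded by $\sigma^*$, so that $N_s$ has at most linear growth, $\|N_s(f)\|\le (2\sigma^*\eps^{-2}+C_*\eps^{-1})\|f\|$.

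First I would establish uniqueness, where the dissipative structure does the work. Given two mild solutions $f,g$ with $w:=f-g$, a formal energy estimate gives $\frac{1}{2}\frac{d}{dt}\|w\|^2 = \frac{1}{\eps^2}(\sigma(\li{f})Lw,w) + \frac{1}{\eps^2}([\sigma(\li{f})-\sigma(\li{g})]Lg,w) + \frac{1}{\eps}(m^\eps w,w)$, since $A$ is skew-adjoint. The first term is $\le -\frac{\sigma_*}{\eps^2}\|Lw\|^2$ by the pointwise Cauchy--Schwarz bound $|\li{w}|^2\le\int_V|w|^2F^{-1}\dd\mu$ together with $\sigma\ge\sigma_*$, which is the quantitative form of \eqref{dissip}. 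The cross term is the crux: writing $w=\li{w} F - Lw$ and using the conservation property $\int_V Lg\,\dd\mu=0$, its $\li{w} F$ component drops out, leaving $-([\sigma(\li{f})-\sigma(\li{g})]Lg,Lw)$, which I bound by $C_\sigma|\li{w}|$ in $x$ and Cauchy--Schwarz in $v$, then split with Young's inequality so that half of $\frac{\sigma_*}{\eps^2}\|Lw\|^2$ is absorbed into the dissipation. Together with $(m^\eps w,w)\le C_*\|w\|^2$, this closes a Gronwall argument, provided the residual $\int_{\T^N}|\li{w}|^2\bigl(\int_V|Lg|^2F^{-1}\dd\mu\bigr)\dd x$ is controlled; this is exactly where I expect the main obstacle, and it forces a functional setting in which $\sup_x\int_V|g|^2F^{-1}\dd\mu$ is finite along the solutions considered.

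For existence I would run a Picard/fixed-point scheme on a short interval $[0,\tau]$ for the Duhamel map $\Phi(f)_t=\mathcal{T}(t/\eps)\fe_0+\int_0^t\mathcal{T}((t-s)/\eps)N_s(f_s)\,\dd s$. Because $\sigma(\li{\cdot})L$ is bounded and continuous but only locally Lipschitz on the smaller space controlling $\sup_x\int_V|\cdot|^2F^{-1}\dd\mu$ (the Nemytskii map $\rho\mapsto\sigma(\rho)$ is Lipschitz on $L^2_x$, but the product with $Lf$ needs an $L^\infty_x$ factor), the contraction has to be set up there; alternatively one replaces the fixed point by an approximation/compactness argument using the same a priori bounds. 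The global $\LL$ a priori bound comes from testing against $\fe$ itself: $\frac{1}{2}\frac{d}{dt}\|\fe\|^2\le -\frac{\sigma_*}{\eps^2}\|L\fe\|^2+\frac{C_*}{\eps}\|\fe\|^2$, hence $\|\fe_t\|\le e^{C_*t/\eps}\|\fe_0\|$ on $[0,T]$, ruling out blow-up and extending the local solution; since every constant is uniform in $\omega$, the solution lies in $L^\infty(\Omega,C([0,T],\LL))$.

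Finally, for the strong statement I would start from $\fe_0\in\mathrm{D}(A)$ and bootstrap the regularity of the mild solution. The point is that $N_s$ maps $\mathrm{D}(A)$ into $\mathrm{D}(A)$ with controlled norm: the commutator of $A=a(v)\cdot\nabla_x$ with multiplication by $m^\eps$ is multiplication by $a(v)\cdot\nabla_x m^\eps$, bounded because $\|m^\eps\|_{W^{1,\infty}(\T^N)}\le C_*$ by \eqref{mbound}, and $A$ commutes with the $v$-averaging up to the bounded factor $\sigma(\li{\cdot})$; differentiating the equation and running the analogous energy estimate on $\|A\fe\|$ yields a Gronwall bound for $\fe$ in $C([0,T],\mathrm{D}(A))$. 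Standard semigroup theory then upgrades the mild solution to a strong one, giving $\fe\in C^1([0,T],\LL)\cap C([0,T],\mathrm{D}(A))$, again with $\omega$-uniform bounds. I expect the genuinely delicate point throughout to remain the non-Lipschitz character of $\sigma(\li{\cdot})L$, tamed only by the combination of the dissipativity \eqref{dissip}, the average-conservation identity $\int_V Lf\,\dd\mu=0$, and the $W^{1,\infty}$ control of the noise.
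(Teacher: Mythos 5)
There is a genuine gap, and to your credit you flagged it yourself: both your uniqueness argument and your Picard scheme are left \emph{conditional} on finiteness of $\sup_x\int_V|g|^2F^{-1}\,\dd\mu$ along the solutions, and this condition is neither assumed nor obtainable. The Proposition claims existence and uniqueness in $C([0,T],\LL)$ for data $\fe_0$ merely in $\LL$, so the residual term $\int_{\T^N}|\li{w}|^2\bigl(\int_V|Lg|^2F^{-1}\,\dd\mu\bigr)\dd x$ in your Gronwall estimate cannot be closed: it is an $L^2_x\times L^1_x$ pairing with no available $L^\infty_x$ factor. Nor can the ``smaller space'' be used as a fix: the free transport $(\mathcal{T}(t)f)(x,v)=f(x-a(v)t,v)$ shifts by a $v$-dependent amount, so it is not bounded on $L^\infty_x L^2_{v,F^{-1}}$ (one is pushed to norms of the type $L^2_{v,F^{-1}}(V;L^\infty_x)$, in which the data again need not lie), and the ``approximation/compactness alternative'' is named but not carried out. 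The strong-solution bootstrap inherits the same defect: differentiating the equation produces the term $\eps^{-2}\sigma'(\li{f})(a\cdot\nabla_x\li{f})\,Lf$, a product of two factors each controlled only in $L^2$ in $x$, so the energy estimate on $\|A\fe\|$ does not close either. As written, the proposal therefore establishes neither the existence nor the uniqueness asserted in the statement.

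That said, your diagnosis of the crux is correct — indeed more candid than the paper's own proof, which consists of a pathwise reduction and a direct citation of the Lipschitz perturbation theory of subsections 4.3.1 and 4.3.3 in \cite{cazhar} (using $\|m^\eps_t\|_{W^{1,\infty}}\leq C_*$ only to make all constants uniform in $\omega$, whence the $L^{\infty}(\Omega)$ statements). That citation tacitly treats $f\mapsto\sigma(\li{f})Lf$ as Lipschitz on bounded sets of $\LL$, and your doubt about this is justified: take $g=\phi(x)\psi(v)$ with $\int_V\psi\,\dd\mu=0$, $\phi(x)=|x|^{-\beta}$ near $0$ with $0<\beta<N/2$, and $f_r:=g+\mathbf{1}_{B_r}F$; then $\li{f_r}=\mathbf{1}_{B_r}$, $Lf_r=Lg=-g$, so
$$\|\sigma(\li{f_r})Lf_r-\sigma(\li{g})Lg\|=|\sigma(1)-\sigma(0)|\Bigl(\int_{B_r}\phi^2\,\dd x\Bigr)^{\frac12}\Bigl(\int_V\frac{\psi^2}{F}\,\dd\mu\Bigr)^{\frac12}\sim r^{\frac{N}{2}-\beta},$$
while $\|f_r-g\|=|B_r|^{1/2}\sim r^{N/2}$, so for non-constant $\sigma$ (say $\sigma(1)\neq\sigma(0)$) the Lipschitz ratio blows up on a bounded set. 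So the obstacle you hit is not an artifact of your method; the paper's two-line proof glosses over it rather than resolving it. To match the paper you would simply invoke \cite{cazhar} in the same way; to produce a genuinely complete argument you would have to supply what the citation hides — e.g.\ a contraction for regularized data in a product-friendly norm such as $L^2_{F^{-1}}(V;L^\infty(\T^N))$ followed by a stability-and-density passage to $\LL$, or your compactness alternative made precise — and your proposal stops exactly at that decisive step.
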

\begin{proof}
Subsections 4.3.1 and 4.3.3 in \cite{cazhar} gives that $\PP-$a.s. there exists a unique mild solution $\fe\in C([0,T],\LL)$ and it is not difficult to slightly modify the proof to obtain that in fact $\fe\in L^{\infty}(\Omega,C([0,T],\LL))$ (we intensively use that for all $t\geq 0$ and $\eps>0$, $\|m^{\eps}_t\|_{W^{1,\infty}(\T^N)}\leq C_*$). \\
Similarly, subsections 4.3.1 and 4.3.3 in \cite{cazhar} gives us $\PP-$a.s. a strong solution $\fe$ in the spaces $C^1([0,T],\LL)$ and $C([0,T],\mathrm{D}(A))$ of \eqref{rt} and once again one can easily get that in fact $\fe$ belongs to the spaces $L^{\infty}(\Omega,C^1([0,T],\LL))$ and $L^{\infty}(\Omega,C([0,T],\mathrm{D}(A)))$. 
\end{proof}

\begin{remark} If $f^{\eps}_0\in \mathrm{D}(A)$, we thus have, for $\eps>0$ fixed,
\begin{equation}\label{bornew}
\sup\limits_{t\in[0,T]}\|\fe_t\|+\sup\limits_{t\in[0,T]}\|Af^{\eps}_t\| \in L^{\infty}(\Omega).
\end{equation}
\end{remark}

\subsection{Main result}
We are now ready to state our main result.
\begin{theorem}\label{mainresult}
Assume that $(f^{\eps}_0)_{\eps>0}$ is bounded in $\LL$ and that 
$$\rho^{\eps}_0:=\int_Vf^{\eps}_0\,\dd \mu(v)\underset{\eps\to 0}{\longrightarrow} \rho_0 \text{ in }L^2(\T^N).$$
Then, for all $\eta>0$ and $T>0$, $\re:=\li{\fe}$ converges in law in $C([0,T],H^{-\eta}(\T^N))$ and $L^2(0,T;L^2(\T^N))$ to the solution $\rho$ to the non-linear stochastic diffusion equation
\begin{equation}\label{stochasticeq}
\dd \rho - \mathrm{div}_x(\sigma(\rho)^{-1}K\nabla_x\rho)\ \dd t\ =\ H\rho\ \dd t + \rho\, Q^{\frac{1}{2}}\dd W_t, \text{ in } [0,T] \times \T^N,
\end{equation}
with initial condition $\rho(0)=\rho_0$ in $L^2(\T^N)$, and where $W$ is a cylindrical Wiener process on $L^2(\T^N)$, \begin{equation}
\label{defK}
K:=\int_V a(v)\otimes a(v)F(v)\,\dd \mu(v)
\end{equation}
and 
\begin{equation}
\label{defH}
H:=\int_E nM^{-1}I(n)\,\dd \nu(n)\in W^{1,\infty}.
\end{equation}
\end{theorem}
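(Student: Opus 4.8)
The plan is to follow the perturbed test-functions (martingale) method, the decisive preparatory work being a strong compactness estimate that the nonlinearity of $\sigma$ forces upon us. First I would derive uniform a priori bounds: pairing \eqref{rt} with $\fe$ in $\LL$ and using the dissipativity \eqref{dissip} together with the uniform control $\|m^\eps_t\|_{W^{1,\infty}(\T^N)}\le C_*$, I would obtain $\sup_{\eps}\E\sup_{t\le T}\|\fe_t\|^2<\infty$ and, crucially, a bound of the form $\frac{1}{\eps^2}\E\int_0^T\|\sigma^{\frac12}(\li{\fe})L\fe\|^2\,ds\le C$. Since $L\fe=\re F-\fe$, this says that $\fe$ is $O(\eps)$-close to its local equilibrium $\re F$, so I would record the decomposition $\fe=\re F+\eps r^\eps$ with $r^\eps$ bounded in $L^2(\Omega\times(0,T);\LL)$.

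The next and hardest step is tightness. Tightness of $(\re)$ in $C([0,T];H^{-\eta}(\T^N))$ follows, as in \cite{arnaudjulien}, from the evolution equation satisfied by $\re$ and the bounds above. But since $\sigma(\rho)^{-1}$ enters the limit nonlinearly, weak convergence of $\re$ is not enough: I must establish tightness of $(\re)$ in $L^2(0,T;L^2(\T^N))$, i.e. strong spatial compactness. This is where the nondegeneracy hypothesis \eqref{nondegenlemmemoy} and the averaging lemmas of \cite{jabin} enter. Viewing the kinetic equation as a transport equation whose source is bounded in $L^2$ (thanks to the equilibrium estimate and the bounded noise), the averaging lemma yields a uniform fractional Sobolev bound in $x$ for the velocity average $\re=\li{\fe}$; combined with an equicontinuity estimate in time this gives compactness in $L^2_{t,x}$. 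I expect the main obstacle to be transferring the deterministic averaging lemma to the stochastic, nonlinear setting, namely controlling the extra source $\tfrac1\eps\fe m^\eps$ and the $\li{\fe}$-dependence of $\sigma$ uniformly in $\eps$ and $\omega$ in a manner compatible with the Fourier-side estimates underlying \cite{jabin}.

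With tightness in hand I would identify the limit via the perturbed test-functions method applied to the Markov pair $(\fe_t,m^\eps_t)$, whose generator reads $\mathcal L^\eps\Phi=\eps^{-2}\big((\sigma(\li{f})Lf,D_f\Phi)+M\Phi\big)+\eps^{-1}\big(-(a\cdot\nabla_x f,D_f\Phi)+(fn,D_f\Phi)\big)$. Starting from $\Phi_0=\varphi((\re,\phi))$, which lies in the kernel of the $\eps^{-2}$ part because $\li{L\fe}=0$ and $\Phi_0$ is independent of $n$, I would build correctors $\Phi_1,\Phi_2$ so that $\Phi^\eps=\Phi_0+\eps\Phi_1+\eps^2\Phi_2$ makes $\mathcal L^\eps\Phi^\eps$ converge. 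The $\eps^{-1}$ term is solvable because its right-hand side is centered: the transport of the equilibrium has vanishing velocity average by the null-flux condition \eqref{nullflux}, and the noise part $(\fe n,D_f\Phi_0)$ is $\nu$-centered since $\E m_t=0$; the first piece is inverted through the relaxation operator $\sigma(\li{\cdot})L$ and the second through the Poisson solution $M^{-1}$, hypotheses \eqref{psi1}--\eqref{MIcarre} being exactly what makes $\Phi_1,\Phi_2$ well defined and uniformly bounded. Averaging the resulting $O(1)$ terms against the invariant measure $\nu$, the transport--relaxation contribution produces the diffusion $\mathrm{div}_x(\sigma(\rho)^{-1}K\nabla_x\rho)$ with $K$ as in \eqref{defK}, while the self-interaction of the noise through $M^{-1}I$ yields the Stratonovich-type drift $H\rho$ with $H$ as in \eqref{defH} together with the quadratic-variation term associated to $\rho\,Q^{\frac12}\dd W_t$.

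Finally I would pass to the limit. Along a converging subsequence, the Dynkin martingale $\Phi^\eps(\fe_t,m^\eps_t)-\int_0^t\mathcal L^\eps\Phi^\eps\,ds$ together with the bounds above shows that every limit point $\rho$ solves the martingale problem attached to \eqref{stochasticeq}; here the $L^2_{t,x}$ strong convergence is indispensable, since it is what allows one to pass to the limit in the nonlinear coefficient $\sigma(\re)^{-1}$ and in the $\sigma$-dependent correctors, whereas weak convergence would leave $\sigma(\rho)^{-1}K\nabla_x\rho$ unidentified. To upgrade convergence of subsequences into convergence in law of the whole family $(\re)_{\eps>0}$, I would invoke uniqueness in law for \eqref{stochasticeq}; proving well-posedness of this nonlinear multiplicative SPDE and matching its martingale problem with the limit generator computed above is the remaining substantial point.
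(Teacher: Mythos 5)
Your overall architecture (uniform bounds, double tightness with the averaging lemma of \cite{jabin}, martingale problem, uniqueness in law) matches the paper, but two of your central analytic steps fail as written. First, the a priori bound: pairing \eqref{rt} with $\fe$ in $\LL$ does \emph{not} give $\sup_{\eps}\E\sup_{t\leq T}\|\fe_t\|^2<\infty$. The skew-adjointness of $A$ and the dissipativity \eqref{dissip} kill two terms, but the noise contributes $\frac{1}{\eps}(\fe_t m^{\eps}_t,\fe_t)\leq \frac{C_*}{\eps}\|\fe_t\|^2$, and Gronwall then yields a bound of order $e^{2C_*T/\eps}$, which blows up as $\eps\to 0$. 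The boundedness of $m^{\eps}$ alone cannot see the cancellation; one must exploit that $m$ is centered and mixing on the fast time scale $t/\eps^2$. This is exactly why the paper proves \eqref{L2bound} and \eqref{L2boundtemps} by the perturbed test-function method itself, correcting $\varphi(f)=\frac12\|f\|^2$ by $\eps\varphi_1$ with $\varphi_1(f,n)=-(f,\MI f)$ so that the singular term $\frac{1}{\eps}(fn,f)$ is cancelled, and then running a martingale (Burkholder--Davis--Gundy plus Gronwall) argument.

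Second, the identification of the diffusion term: you propose to invert the $\eps^{-1}$ transport term through the relaxation operator $\sigma(\li{\cdot})L$, which leads to a corrector of the form $\varphi_1^{tr}(f)=\pm\,\sigma(\li{f})^{-1}(A(f-\li{f}F),D\varphi(\li{f}F))$, and to read the limit diffusion off the $O(1)$ term $(Af,D\varphi_1^{tr}(f))$. This is the standard route in the linear case $\sigma\equiv\sigma_0$, but it is precisely the route the paper rejects: the prefactor $\sigma(\li{f})^{-1}$ makes $D\varphi_1^{tr}$ involve $\sigma'(\li{f})$ (while (H2) only gives $\sigma$ Lipschitz), and the second corrector $\varphi_2$, which must solve a Poisson equation whose data contain these terms, can be written formally but no longer satisfies bounds of the type \eqref{f2bounds}; the authors flag this obstruction explicitly at the end of Section 4.1. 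The paper's actual mechanism is different: the transport term is left uncorrected and singular in \eqref{aftertwo}, the extra estimate \eqref{gepsbound} gives weak convergence of $\eps^{-1}\gep$ (where $\gep=\fe-\re F$) to some $g$, and the flux is identified by Lemma \ref{lemmag}, namely $\li{a(v)g}=-\sigma(\rho)^{-1}K\nabla_x\rho$, which is proved from the weak formulation of \eqref{rt} itself; the strong $L^2(0,T;L^2)$ tightness you correctly anticipate is then what allows passing to the limit in the product $\sigma(\re)L(\eps^{-1}\gep)$ and in the nonlinear coefficient. Without replacing your corrector construction by such an argument (or proving well-behavedness of your $\varphi_2$, which the nonlinearity obstructs), the limit generator, and hence \eqref{stochasticeq} with the coefficient $\sigma(\rho)^{-1}K$, is not reached.
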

\begin{remark} The limit equation $(\ref{stochasticeq})$ can also be written in Stratonovich form
$$
\dd \rho - \mathrm{div}_x(\sigma(\rho)^{-1}K\nabla_x\rho)\ \dd t\ =\ \rho\circ Q^{\frac{1}{2}}\dd W_t.
$$
\end{remark}

\noindent \textbf{Notation} In the sequel, we denote by $\lesssim$ the inequalities which are valid up to constants of the problem, namely $C_*$, $N$, $\sup_{\eps>0}\|\fe_0\|$, $\sup_{v\in V}|a(v)|$, $\sup_{v\in V}F(v)$, $\sigma_*$, $\sigma^*$, $\|\sigma\|_{\text{Lip}}$  and real constants.

\section{The generator}
The process $f^{\eps}$ is not Markov (indeed, by \eqref{rt}, we need $m^{\eps}$ to know the increments of $f^{\eps}$) but the couple $(f^{\eps},m^{\eps})$ is. From now on, we denote by $\mathscr{L}^{\eps}$ its infinitesimal generator, that is
$$\mathscr{L}^{\eps}\varphi(f,n):=\lim_{h\to 0}\frac{1}{h}\E\left[\varphi(f^{\eps}_h,m^{\eps}_h)-\varphi(f,n)\big{|}(f^{\eps}_0,m^{\eps}_0)=(f,n)\right],$$
where $\varphi:\LL\times E\to\R$ belongs to the domain of $\mathscr{L}^{\eps}$. Thus we begin this section by introducing a special set of functions which lie in the domain of  $\mathscr{L}^{\eps}$ and satisfy the associated martingale problem. \\ \\
In the following, if $\varphi:\LL\rightarrow\R$ is differentiable with respect to $f\in\LL$, we denote by $D\varphi(f)$ its differential at a point $f$ and we identify the differential with the gradient. 
\begin{definition}\label{goodtest}We say that $\varphi:\LL\times E\rightarrow\R$ is a good test function if 
\begin{enumerate}
\item[$(i)$]$(f,n)\mapsto\varphi(f,n)$ is differentiable with respect to $f$;
\item[$(ii)$]$(f,n)\mapsto D\varphi(f,n)$ is continuous from $\LL\times E$ to $\LL$ and maps bounded sets onto bounded sets;
\item[$(iii)$]for any $f\in\LL$, $\varphi(f,\cdot)\in D_M$;
\item[$(iv)$]$(f,n)\mapsto M\varphi(f,n)$ is continuous from $\LL\times E$ to $\R$ and maps bounded sets onto bounded sets.
\end{enumerate}
\end{definition}

\begin{prop}\label{gene}Let $\varphi$ be a good test function. Then, for all $(f,n)\in\mathrm{D}(A)\times E$, 
$$\mathscr{L}^{\eps}\varphi(f,n)=-\frac{1}{\eps}(Af,D\varphi(f))+\frac{1}{\eps^2}(\sigma(\li{f})Lf,D\varphi(f))+\frac{1}{\eps}(fn,D\varphi(f))+\frac{1}{\eps^2}M\varphi(f,n).$$
Furthermore, if $f^{\eps}_0\in\mathrm{D}(A)$,
$$M^{\eps}_{\varphi}(t):=\varphi(f^{\eps}_t,m^{\eps}_t)-\varphi(f^{\eps}_0,m^{\eps}_0)-\int_0^t\mathscr{L}^{\eps}\varphi(f^{\eps}_s,m^{\eps}_s)\,ds$$
is a continuous and integrable $(\mathcal{F}^{\eps}_t)_{t\geq 0}$ martingale, and if $|\varphi|^2$ is a good test function, its quadratic variation is given by
$$\langle M^{\eps}_{\varphi}\rangle_t=\int_0^t(\mathscr{L}^{\eps}|\varphi|^2-2\varphi\mathscr{L}^{\eps}\varphi)(f^{\eps}_s,m^{\eps}_s)\,ds.$$
\end{prop}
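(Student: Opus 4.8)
The plan is to establish the generator formula by directly computing the limit defining $\mathscr{L}^{\eps}$, splitting the increment of $\varphi(f^{\eps}_t,m^{\eps}_t)$ into a part coming from the evolution of $f^{\eps}$ (with $n$ frozen) and a part coming from the Markovian dynamics of $m^{\eps}$ (with $f$ frozen). More precisely, I would write formally
\begin{align*}
\frac{1}{h}\E\left[\varphi(f^{\eps}_h,m^{\eps}_h)-\varphi(f,n)\mid (f^{\eps}_0,m^{\eps}_0)=(f,n)\right]
&= \frac{1}{h}\E\left[\varphi(f^{\eps}_h,m^{\eps}_h)-\varphi(f,m^{\eps}_h)\right] \\
&\quad + \frac{1}{h}\E\left[\varphi(f,m^{\eps}_h)-\varphi(f,n)\right].
\end{align*}
For the second term, since $m^{\eps}_t=m_{t/\eps^2}$, the conditional expectation $\E[\varphi(f,m^{\eps}_h)\mid m^{\eps}_0=n]=(e^{(h/\eps^2)M}\varphi(f,\cdot))(n)$, so by definition of $M$ as the generator of $(e^{tM})$ this contributes $\frac{1}{\eps^2}M\varphi(f,n)$; here one uses point $(iii)$ of the definition of a good test function, namely $\varphi(f,\cdot)\in D_M$. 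For the first term, I would use that $f^{\eps}$ is differentiable in time when $f\in\mathrm{D}(A)$ (the strong solution regularity from Proposition \ref{solkinetic}) and expand $\varphi$ to first order in its $f$-argument, pairing $D\varphi(f,n)$ against $\partial_t f^{\eps}_0 = -\frac{1}{\eps}Af+\frac{1}{\eps^2}\sigma(\li{f})Lf+\frac{1}{\eps}fn$, which yields exactly the three remaining terms.

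The key steps, in order, are as follows. First, I would justify differentiating $t\mapsto\varphi(f^{\eps}_t,n)$ and read off its derivative at $t=0$ using the chain rule together with the strong form of \eqref{rt}; the hypothesis $f\in\mathrm{D}(A)$ guarantees $Af\in\LL$ so that all three terms $(Af,D\varphi(f))$, $(\sigma(\li{f})Lf,D\varphi(f))$ and $(fn,D\varphi(f))$ make sense, using $(ii)$ for the continuity and boundedness of $D\varphi$. Second, I would handle the $m$-part via the semigroup $e^{tM}$ as above. Third, to control the cross terms and the error in the first-order expansion, I would use the boundedness-on-bounded-sets properties in $(ii)$ and $(iv)$ together with the a priori bound \eqref{bornew}, which ensures $f^{\eps}_t$ stays in a bounded set of $\mathrm{D}(A)$ uniformly on $[0,T]$, $\PP$-a.s.

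For the martingale statement, the plan is to apply Dynkin's formula: for a function in the domain of the generator of the Markov process $(f^{\eps},m^{\eps})$, the process $M^{\eps}_{\varphi}(t)$ is by construction a local martingale, and integrability and genuine (not merely local) martingale property follow from the uniform bounds — $\varphi$ and $\mathscr{L}^{\eps}\varphi$ are bounded on the bounded set where the trajectory lives, again by \eqref{bornew} and properties $(ii)$, $(iv)$. Continuity in $t$ follows from the continuity of $t\mapsto(f^{\eps}_t,m^{\eps}_t)$ in $\LL\times E$ (the path regularity from Proposition \ref{solkinetic} and stochastic continuity of $m$) composed with the continuity of $\varphi$. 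Finally, the quadratic variation formula is the standard carré-du-champ identity: for the martingale associated with a Markov generator one has $\langle M^{\eps}_{\varphi}\rangle_t=\int_0^t\big(\mathscr{L}^{\eps}|\varphi|^2-2\varphi\,\mathscr{L}^{\eps}\varphi\big)(f^{\eps}_s,m^{\eps}_s)\,ds$, which I would derive by applying the first part of the proposition to both $\varphi$ and $|\varphi|^2$ (the latter being assumed a good test function) and identifying $|M^{\eps}_{\varphi}|^2-\langle M^{\eps}_{\varphi}\rangle$ as a martingale.

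I expect the main obstacle to be rigorously justifying the first-order Taylor expansion of $\varphi(\cdot,n)$ along the flow and interchanging the limit $h\to 0$ with the expectation, rather than the purely algebraic identification of the four terms. The delicate point is that $f^{\eps}_t$ is the solution of a nonlinear equation (through $\sigma(\li{\fe})$) and $\varphi$ is only once differentiable in $f$, so I would lean on the strong-solution regularity $f^{\eps}\in C^1([0,T],\LL)\cap C([0,T],\mathrm{D}(A))$ from Proposition \ref{solkinetic} to get a genuine time derivative at $t=0$, and on the uniform $\PP$-a.s. bound \eqref{bornew} to dominate the difference quotients so that dominated convergence applies. The separation of the $f$-evolution from the $m$-evolution in the increment must also be done carefully, since both move simultaneously; controlling the joint second-order remainder is where the boundedness hypotheses $(ii)$ and $(iv)$ do the real work.
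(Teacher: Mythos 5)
Your proposal is correct and follows essentially the same route as the paper: the same splitting of the increment into an $f$-part (with $m^{\eps}_h$ in the second argument) and an $m$-part, the identification of the $\eps^{-2}M\varphi(f,n)$ contribution via the transition semigroup and point $(iii)$, and the first-order expansion of $\varphi$ along the $C^1([0,T],\LL)$ trajectory, with the passage to the limit justified by the continuity and boundedness properties $(ii)$ and $(iv)$ together with the a.s. bound \eqref{bornew} (the paper phrases this as convergence in probability plus uniform integrability on a product space, which is exactly your domination argument). For the martingale and quadratic variation statements the paper likewise invokes the standard Dynkin/carr\'e-du-champ argument, deferring details to \cite{arnaudjulien} and \cite{fgps}, so your plan matches it.
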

\begin{proof}We compute the expression of the infinitesimal generator as follows :
\begin{alignat*}{2}
\mathscr{L}^{\eps}\varphi(f,n)&=\lim_{h\to 0}\frac{1}{h}\E\left[\varphi(f^{\eps}_h,m^{\eps}_h)-\varphi(f,n)\big{|}(f^{\eps}_0,m^{\eps}_0)=(f,n)\right]\\
&=\lim_{h\to 0}\frac{1}{h}\E\left[\varphi(f^{\eps}_h,m^{\eps}_h)-\varphi(f,m^{\eps}_h)\big{|}(f^{\eps}_0,m^{\eps}_0)=(f,n)\right]\\
&+\lim_{h\to 0}\frac{1}{h}\E\left[\varphi(f,m^{\eps}_h)-\varphi(f,n)\big{|}m^{\eps}_0=n\right]
\end{alignat*}
Since $\varphi$ verifies point $(iii)$ of Definition \ref{goodtest}, the second term of the last equality goes to $\eps^{-2}M\varphi(f,n)$ when $h\to 0$. We now focus on the first term. With points $(i)-(ii)$ of Definition \ref{goodtest}, we have that $\varphi$ is continuously differentiable with respect to $f$. Thus 
$$\varphi(f^{\eps}_h,m^{\eps}_h)-\varphi(f,m^{\eps}_h)=\int_0^1D\varphi(f+s(f^{\eps}_h-f),m^{\eps}_h)(f^{\eps}_h-f)\,\dd s.$$ Besides, since $f^{\eps}_0=f\in\mathrm{D}(A)$, $f^{\eps}\in C^1([0,T],\LL)$ and we have $$f^{\eps}_h-f=h\int_0^1\partial_tf^{\eps}_{uh}\,\dd u.$$
Thus, we can rewrite the first term as
\begin{alignat*}{2}
&=\lim_{h\to 0}\frac{1}{h}\E\left[\varphi(f^{\eps}_h,m^{\eps}_h)-\varphi(f,m^{\eps}_h)\big{|}(f^{\eps}_0,m^{\eps}_0)=(f,n)\right]\\
&=\lim_{h\to 0}\E_{(f,n)}\left[\int_0^1\!\!\!\int_0^1a_h(w,s,u)\,\dd u\,\dd s\right],
\end{alignat*}
with $a_h(w,s,u):=D\varphi(f+s(f^{\eps}_h-f),m^{\eps}_h)(\partial_tf^{\eps}_{uh})$ and where $\E_{(f,n)}$ denotes the expectation under the probability measure $\PP_{(f,n)}:=\PP(\;\cdot\;|(f_0^{\eps},m_0^{\eps})=(f,n))$.\\

\noindent Recall that $D\varphi$ is continuous with respect to $(f,n)$ thanks to point $(ii)$ of Definition \ref{goodtest}, that $f^{\eps}$ is $\PP-$a.s. in $C^1([0,T],\LL)$ and that $m^{\eps}$ is stochastically continuous to conclude that $a_h$ converges in probability as $h\to 0$ to $D\varphi(f,n)(\partial_tf^{\eps}(0))$ in the probability space $\tilde{\Omega}:=(\Omega\times [0,1]\times [0,1],\PP_{(f,n)}\otimes \dd x\otimes \dd s)$. Furthermore, we prove that $(a_h)_{0\leq h\leq 1}$ is uniformly integrable in $\tilde{\Omega}$ since it is uniformly bounded with respect to $0\leq h\leq 1$ in $L^{\infty}(\tilde{\Omega})$. Indeed, with the fact that $L$ is a bounded operator, with (H1) and the fact that $\|n\|_{L^{\infty}(\T^N)}\lesssim 1$ for all $n\in E$, we get
$$
|a_h|\lesssim \|D\varphi(f+s(f^{\eps}_h-f),m^{\eps}_h)\|(\|f^{\eps}_{uh}\|+\|Af^{\eps}_{uh}\|).$$
With $(\ref{bornew})$, we set 
$$R:=\sup\limits_{t\in[0,T]}\|f^{\eps}_t\|+\sup\limits_{t\in[0,T]}\|Af^{\eps}_t\|\in L^{\infty}(\Omega),$$
and define $r := \|R\|_{L^{\infty}(\Omega)}$.
Then, since $D\varphi$ maps bounded sets on bounded sets, we can bound the term $||D\varphi(f+s(f^{\eps}_h-f),m^{\eps}_h)||$ by $$C:=\sup\left\{\|D\varphi(f,n)\|, f\in B_{\LL}\!\!(0,\|f\|+r),n\in B_{E}(0,C_*)\right\}.$$
So we are led to
$$
\|a_h\|_{L^{\infty}(\tilde{\Omega})}\lesssim C\cdot r,$$
which is what we announced.
To prove the sequel of the proposition, we use the same kind of ideas and follow the proofs of \cite[Proposition 6]{arnaudjulien} and \cite[Appendix 6.9]{fgps}. 
\end{proof}

\section{The limit generator}

In this section, we study the limit of the generator $\mathscr{L}^{\eps}$ when $\eps\to 0$. The limit generator $\mathscr{L}$ will characterize the limit stochastic fluid equation.

\subsection{Formal derivation of the corrections}\label{subsec:formaltest}
\noindent To derive the diffusive limiting equation, one has to study the limit as $\eps$ goes to $0$ of quantities of the form $\mathscr{L}^{\eps}\varphi$ where $\varphi$ is a good test function. To do so, following the perturbed test-functions method, we have to correct $\varphi$ so as to obtain a non-singular limit. We search the correction $\varphi^{\eps}$ of $\varphi$ under the classical form:
$$\varphi^{\eps}:=\varphi+\eps\varphi_1+\eps^2\varphi_2.$$
In this decomposition, $\varphi_1$ and $\varphi_2$ are respectively the first and second order corrections and are to be defined in the sequel so that
$$\mathscr{L}^{\eps}\varphi^{\eps}=\mathscr{L}\varphi+O(\eps),$$
where $\mathscr{L}$ will be the limit generator. We restrict our study to smooth test-functions. Precisely, we introduce the set of spatial derivative operators up to order $3$:
$$\mathcal{R}:=\{\partial^{e_1}_{i_1}\partial^{e_2}_{i_2}\partial^{e_3}_{i_3},\,e\in\{0,1\}^3,\,i\in \{1,...,N\}^3, \,|i|\leq 3\}$$
and we suppose that the test-function $\varphi$ is a good test, that $\varphi\in C^3(\LL)$ and that there exists a constant $C_{\varphi}>0$ such that
\begin{equation}\label{smoothtest}
\left\{
\begin{aligned}
& |\varphi(f)|\leq C_{\varphi}(1+\|f\|^2), \\
& \|\Lambda D\varphi(f)\|\leq C_{\varphi}(1+\|f\|), \\
& |D^2\varphi(f)(\Lambda_1h,\Lambda_2k)|\leq C_{\varphi}\|h\|\|k\|, \\
& |D^3\varphi(f)(\Lambda_1h,\Lambda_2k,\Lambda_3l)|\leq C_{\varphi}\|h\|\|k\|\|l\|,
\end{aligned}
\right.
\end{equation}
for any $f,h,k,l\in \LL$ and $\Lambda,\Lambda_1,\Lambda_2,\Lambda_3 \in \mathcal{R}$. Thanks to Proposition \ref{gene}, and since $\varphi$ does not depend on $n\in E$, we can write
\begin{align}
\mathscr{L}^{\eps}\varphi^{\eps}(f,n)&\label{formal1}=-\frac{1}{\eps}(Af,D\varphi(f))+\frac{1}{\eps^2}(\sigma(\li{f})Lf,D\varphi(f))+\frac{1}{\eps}(fn,D\varphi(f))\\
&\label{formal2}-(Af,D\varphi_1(f))+\frac{1}{\eps}(\sigma(\li{f})Lf,D\varphi_1(f))+(fn,D\varphi_1(f))+\frac{1}{\eps}M\varphi_1\\
&\label{formal3}-\eps(Af,D\varphi_2(f))+(\sigma(\li{f})Lf,D\varphi_2(f))+\eps(fn,D\varphi_2(f))+M\varphi_2.
\end{align}
In the sequel, we do not care about the terms relative to the transport part $A$ of the equation since these terms will be handled as in the deterministic case (when $m^{\eps}\equiv 0$). To be more precise, and as it will be shown in the sequel, the first term of $(\ref{formal1})$ will give rise, as $\eps$ goes to $0$, to the deterministic term in the limit generator $\mathscr{L}$ and the first terms of $(\ref{formal2})$ and $(\ref{formal3})$ are respectively of orders $\eps$ and $\eps^2$. For the remaining terms, in a first step, we would like to cancel those who have a singular power of $\eps$. Thus we should impose that the two following equations hold:
\begin{equation}\label{formaleq1}
(\sigma(\li{f})Lf,D\varphi(f))=0,
\end{equation}
\begin{equation}\label{formaleq2}
(\sigma(\li{f})Lf,D\varphi_1(f))+M\varphi_1+(fn,D\varphi(f)) = 0.
\end{equation}
Let us say a word about the fact that we chose to handle the terms relative to the transport part of the equation separately. When trying to correct these terms thanks to the correctors $\varphi_1$ and $\varphi_2$, the non-linearity $\sigma$ implies that the second corrector $\varphi_2$, unless we can write it formally, does not behave properly any more.
\subsubsection{Equation on $\varphi$}
Let us solve $(\ref{formaleq1})$. We recall that $(g(t,f))_{t\geq 0}$ denotes the semigroup of the operator $\sigma(\li{\cdot})L$. Equation $(\ref{formaleq1})$ gives immediately that the map $t\mapsto\varphi(g(t,f))$ is constant. As a result, with \eqref{relaxation},
$$\varphi(f)=\varphi(g(0,f))=\varphi(\varphi(g(\infty,f))=\varphi(\li{f}F),$$
so that $\varphi$ only depends on $\li{f}F$. This implies, for all $h\in \LL$,
\begin{equation}\label{moyenformal}
(h,D\varphi(f))=(\li{h}F,D\varphi(\li{f}F)).
\end{equation}
\subsubsection{Equation on $\varphi_1$}
Next, we solve $(\ref{formaleq2})$. We consider the Markov process $(g(t,f),m(t,n))_{t\geq 0}$. Its generator will be denoted by $\mathscr{M}$. We observe that equation $(\ref{formaleq2})$ rewrites:
$$\mathscr{M}\varphi_1(f,n)=-(fn,D\varphi(f)).$$
This Poisson equation will have a solution if the integral of $(f,n)\mapsto(fn,D\varphi(f))$ over $\LL\times E$ equipped with the invariant measure of the process $(g(t,f),m(t,n))_{t\geq 0}$ is zero. So, we must verify that 
$$\int_E(\li{f}Fn,D\varphi(\li{f}F))\,\dd \nu(n)=0,$$
and this relation does hold since $m$ is centered. As a consequence, if we can prove the existence of the integral, we can write $\varphi_1$ as
$$\varphi_1(f,n)=\int_0^{\infty}\E(g(t,f)m(t,n),D\varphi(g(t,f)))\,\dd t.$$
Then, we use $(\ref{moyenformal})$, $\li{g(t,f)}=\li{f}$ and \eqref{psi1} and \eqref{MI} to obtain
\begin{align*}
\varphi_1(f,n)&=\int_0^{\infty}\E(\li{f}F m(t,n),D\varphi(\li{f}F))\,\dd t = -(\li{f}F \MI,D\varphi(\li{f}F))\\
&=-(f\MI,D\varphi(f)).
\end{align*}
We are now able to state the
\begin{prop}[First corrector]\label{f1} Let $\varphi\in C^3(\LL)$ be a good test-function satisfying $(\ref{smoothtest})$ and depending only on $\li{f}F$. For any $(f,n)\in\LL\times E$, we define the first corrector $\varphi_1$ as
$$\varphi_1(f,n):=-(f\MI,D\varphi(f)).$$
Furthermore, it satisfies the bounds
\begin{equation}\label{f1bounds}
(i)\;\;|\varphi_1(f,n)|\lesssim C_{\varphi}(1+\|f\|)^2, \quad (ii)\;\;\|AD\varphi_1(f,n)\|\lesssim C_{\varphi}(1+\|f\|).
\end{equation}
\end{prop}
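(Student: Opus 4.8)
The plan is to establish the two bounds $(i)$ and $(ii)$ directly from the explicit formula $\varphi_1(f,n) = -(f\MI, D\varphi(f))$, using the structural assumptions gathered in Section~\ref{sectionmeps} together with the smoothness bounds $(\ref{smoothtest})$ on $\varphi$. For $(i)$, I would start by recalling that since $\varphi$ depends only on $\li{f}F$, relation $(\ref{moyenformal})$ gives $(f\MI, D\varphi(f)) = (\li{f\MI}F, D\varphi(\li{f}F))$, so that $\varphi_1(f,n)$ can be rewritten purely in terms of $\li{f}F$ and $\MI$. The natural route is then to invoke $(\ref{MI})$, which defines $\MI$ through $(f\MI,g) = M^{-1}\psi^{(1)}_{f,g}(n) = M^{-1}\psi^{(2)}_{\dots}$, and to apply the uniform bound $|M^{-1}\psi^{(2)}_{f,g}| \le C_*\|f\|\|g\|$ from $(\ref{mbound})$. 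Here I would set $g = D\varphi(\li{f}F)$ and use the second line of $(\ref{smoothtest})$ (with $\Lambda$ the identity operator in $\mathcal{R}$, i.e.\ $e=0$) to get $\|D\varphi(\li{f}F)\| \lesssim C_\varphi(1+\|\li{f}F\|) \lesssim C_\varphi(1+\|f\|)$, the last step using $\|\li{f}\|_{L^2_x} \le \|f\|$ and $\sup_v F(v) < \infty$. Combining these yields $|\varphi_1(f,n)| \lesssim C_*\|f\|\cdot C_\varphi(1+\|f\|) \lesssim C_\varphi(1+\|f\|)^2$, which is $(i)$.

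For $(ii)$, I would first compute $D\varphi_1(f,n)$ by differentiating $\varphi_1(f,n) = -(f\MI, D\varphi(f))$ in $f$. The differential splits into two contributions: one where the derivative falls on the linear factor $f \mapsto f\MI$ (producing a term involving $\MI$ acting on the perturbation direction), and one where it falls on $D\varphi(f)$ (producing a $D^2\varphi$ term). I would then apply the operator $A = a(v)\cdot\nabla_x$ and estimate $\|AD\varphi_1(f,n)\|$. The key inputs are the $W^{1,\infty}(\T^N)$ bound $\|\MI\|_{W^{1,\infty}(\T^N)} \le C_*$ from $(\ref{mbound})$, which controls how $A$ interacts with the multiplier $\MI$, together with the third line of $(\ref{smoothtest})$, namely $|D^2\varphi(f)(\Lambda_1 h, \Lambda_2 k)| \le C_\varphi\|h\|\|k\|$ for $\Lambda_i \in \mathcal{R}$. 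Since $A$ is a first-order spatial operator and $\mathcal{R}$ contains the first-order derivatives $\partial_i$, each term produced by commuting $A$ through the multiplier and through the test-function derivatives is of a form directly covered by $(\ref{smoothtest})$; the boundedness of $a$ from $(\ref{abound})$ ensures $A$ behaves like a controlled first-order operator. Assembling these estimates gives $\|AD\varphi_1(f,n)\| \lesssim C_\varphi(1+\|f\|)$.

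The main obstacle I anticipate is the bookkeeping in $(ii)$: one must carefully track how $A$ distributes across the product $(f\MI, D\varphi(f))$ and verify that every resulting derivative of $\varphi$ (up to second order) is paired with spatial derivatives lying in $\mathcal{R}$, so that the abstract bounds $(\ref{smoothtest})$ apply with the correct $\Lambda$'s. In particular, the term where $A$ differentiates the equilibrium structure must be handled using the fact that $\MI$ is bounded in $W^{1,\infty}(\T^N)$ rather than merely in $L^\infty$, since $A$ introduces one spatial derivative that would otherwise land on $\MI$. Once this matching of derivative orders is set up correctly, the estimates are routine applications of the hypotheses and the elementary inequality $\|\li{f}\|_{L^2_x} \le \|f\|$. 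I would also note that, as the authors indicate, the dependence of $\varphi$ only on $\li{f}F$ is what makes $(\ref{moyenformal})$ available and thereby keeps all constants uniform in $n \in E$ via the $C_*$ bounds.
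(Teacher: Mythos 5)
Your proposal is correct and takes essentially the same route as the paper, whose entire proof is the one-line remark that the bounds \eqref{f1bounds} follow from \eqref{mbound} and \eqref{smoothtest}; you simply flesh out that remark (Cauchy--Schwarz plus the $W^{1,\infty}$ control of $\MI$ for $(i)$, differentiation of $\varphi_1$ and commutation of $A$ through the multiplier for $(ii)$, with $\Lambda=\mathrm{id}$ or $\partial_i\in\mathcal{R}$ throughout). One small mis-citation to fix in part $(i)$: the quantity $(f\MI,g)$ equals $M^{\!-\!1}\psi^{(1)}_{f,g}(n)$ by \eqref{MI}, not $M^{\!-\!1}\psi^{(2)}_{f,g}$ (which solves the Poisson equation for $n\mapsto(fn\MI,g)$), so the bound you want there is just Cauchy--Schwarz together with $\|\MI\|_{W^{1,\infty}(\T^N)}\leq C_*$ from \eqref{mbound} --- exactly the ingredient you invoke in part $(ii)$.
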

\noindent Note that the bounds \eqref{f1bounds} are consequences of \eqref{mbound} and \eqref{smoothtest}.
\subsubsection{Equation on $\varphi_2$}
At this stage, we have 
\begin{equation}\label{afterone}
\begin{aligned}
\mathscr{L}^{\eps}\varphi^{\eps}(f,n)&=-\frac{1}{\eps}(Af,D\varphi(f))+\mathscr{M}\varphi_2+(fn,D\varphi_1(f))\\
&-(Af,D\varphi_1(f))-\eps(Af,D\varphi_2(f))+\eps(fn,D\varphi_2(f)).
\end{aligned}
\end{equation}
Note that the limit of $\mathscr{L}^{\eps}\varphi^{\eps}$ as $\eps$ goes to $0$ does depend on $n\in E$ with the term $(fn,D\varphi_1(f))$. Since the expected limit is $\mathscr{L}\varphi$ where $\varphi$ does not depend on $n$, we have to correct this term to cancel the dependence with respect to $n$ of the limit. This is the aim of the second order correction $\varphi_2$. The right way to do so, given the mixing properties of the operator $\mathscr{M}$, is to subtract the mean value of this term under  the invariant measure of the Markov process $(g(t,f),m(t,n))_{t\geq 0}$ governed by $\mathscr{M}$. We write
\begin{align*}
\mathscr{L}^{\eps}\varphi^{\eps}(f,n)&=-\frac{1}{\eps}(Af,D\varphi(f))+\int_E(\li{f}Fn,D\varphi_1(\li{f}F))\,\dd \nu(n)\\
&+\mathscr{M}\varphi_2+(fn,D\varphi_1(f))-\int_E(\li{f}Fn,D\varphi_1(\li{f}F))\,\dd \nu(n)\\
&-(Af,D\varphi_1(f))-\eps(Af,D\varphi_2(f))+\eps(fn,D\varphi_2(f)),
\end{align*}
and we can now define $\varphi_2$ as the solution of the well-posed Poisson equation
$$\mathscr{M}\varphi_2=-(fn,D\varphi_1(f))+\int_E(\li{f}Fn,D\varphi_1(\li{f}F))\,\dd \nu(n).$$
Note that, thanks to the definition of $\varphi_1$ given above, we can compute
$$(\li{f}Fn,D\varphi_1(\li{f}F))=-(fn\MI,D\varphi(f))-D^2\varphi(f)(f\MI,fn)=:q(f,n)$$
As a result, we easily have the following proposition.
\begin{prop}[Second corrector]\label{f2} Let $\varphi\in C^3(\LL)$ be a good test-function satisfying $(\ref{smoothtest})$ and depending only on $\li{f}F$. For any $(f,n)\in\LL\times E$, we define the second corrector $\varphi_2$ as
$$\varphi_2(f,n):=\E\int_0^{\infty}\left(\int_E(q(\li{f}F,n)\,\dd \nu(n)-q(g(t,f),m(t,n))\right)\,\dd t,$$
which is well defined and satisfies the bounds
\begin{equation}\label{f2bounds}
(i)\;\;|\varphi_2(f,n)|\lesssim C_{\varphi}(1+\|f\|)^2, \quad (ii)\;\;\|AD\varphi_2(f,n)\|\lesssim C_{\varphi}(1+\|f\|).
\end{equation}
\end{prop}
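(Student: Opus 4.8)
The plan is to read the defining formula as the representation, through the combined Markov process $(g(t,f),m(t,n))_{t\geq 0}$ with generator $\mathscr{M}$, of the solution of the Poisson equation $\mathscr{M}\varphi_2=\int_E q(\li{f}F,n)\,\dd\nu(n)-q(f,n)$, and to prove simultaneously that the defining time integral converges and that the two announced bounds hold. The starting observation is that the integrand is asymptotically centered: since $g(t,f)\to\li{f}F$ in $\LL$ by \eqref{relaxation} and $m$ is ergodic, $\E\,q(g(t,f),m(t,n))\to\int_E q(\li{f}F,n)\,\dd\nu(n)$ as $t\to\infty$, so the integrand decays to $0$. To quantify the decay I would split it into a relaxation part and a mixing part,
$$
\left[q(\li{f}F,m(t,n))-q(g(t,f),m(t,n))\right]+\left[\int_E q(\li{f}F,n')\,\dd\nu(n')-q(\li{f}F,m(t,n))\right],
$$
and treat the two brackets by the two distinct mechanisms at play.

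For the relaxation bracket I would use the explicit semigroup formula $g(t,f)=\li{f}F+(f-\li{f}F)e^{-t\sigma(\li{f})}$ together with $\sigma\geq\sigma_*$ to obtain $\|g(t,f)-\li{f}F\|\lesssim e^{-\sigma_* t}\|f\|$, and combine it with a local Lipschitz bound of $q$ in its first argument. The latter comes from differentiating $q(f,n)=-(fn\MI,D\varphi(f))-D^2\varphi(f)(f\MI,fn)$ in $f$ and invoking the smoothness bounds \eqref{smoothtest} (here the third order bound on $D^3\varphi$ is needed) together with the $L^\infty$ control of $n$ and $\MI$ from \eqref{mbound}; since $\|m(t,n)\|_\infty\leq C_*$ a.s., this yields $\E\,|\text{relaxation bracket}|\lesssim C_\varphi(1+\|f\|)\,e^{-\sigma_* t}\|f\|$, which is integrable in $t$. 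For the mixing bracket, with $f$ frozen at $\li{f}F$, integrating in $t$ gives exactly $M^{-1}$ applied, in the variable $n$, to the centered function $q(\li{f}F,\cdot)-\int_E q(\li{f}F,n')\,\dd\nu(n')$, by formula \eqref{inversedeM}. Now $q(\li{f}F,\cdot)$ is precisely the sum of a $\psi^{(2)}$-type and a $\psi^{(3)}$-type function as in \eqref{psi23} (with $B=D^2\varphi$), so this inverse is well defined and, by \eqref{mbound}, bounded by $C_\varphi(1+\|f\|)^2$. Combining the two brackets establishes both that $\varphi_2$ is well defined and the bound $(i)$.

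For bound $(ii)$ I would differentiate the representation in $f$ and then apply $A=a(v)\cdot\nabla_x$. Computing $D\varphi_2$ requires differentiating the flow $f\mapsto g(t,f)$ and the frozen average $\li{f}F$, which produces terms in which $A$ acts on products of the $W^{1,\infty}(\T^N)$ data $n$ and $\MI$ and of derivatives of $\varphi$ up to order three. The point is that the spatial derivative from $A$ can always be made to fall either on $n$ or $\MI$, where it is controlled by their $W^{1,\infty}$ norms in \eqref{mbound}, or onto a slot of $D\varphi$, $D^2\varphi$, $D^3\varphi$, where it is absorbed by one of the operators $\Lambda\in\mathcal{R}$, which is exactly why $\mathcal{R}$ was taken up to order three, and bounded through \eqref{smoothtest}; crucially no factor $Af$ ever appears, because $\varphi$ depends on $f$ only through $\li{f}F$ and through the relaxing combination $g(t,f)$, so the velocity structure is carried by $F$, $n$ and $\MI$ rather than by $f$ itself. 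Reassembling and integrating the resulting exponential-times-polynomial bound in $t$ then gives $\|AD\varphi_2(f,n)\|\lesssim C_\varphi(1+\|f\|)$, exactly as for the first corrector in Proposition \ref{f1}.

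The main obstacle will be bound $(ii)$: one must differentiate the semigroup $g(t,f)$, whose generator $\sigma(\li{\cdot})L$ involves the merely Lipschitz nonlinearity $\sigma$, and then commute the transport operator $A$ through both this flow derivative and the time integral while keeping the $t$-decay uniform enough to integrate. Justifying the interchange of $D_f$, $A$, and $\int_0^\infty\E(\cdot)\,\dd t$, and checking that the exponential relaxation rate survives differentiation of $g(t,f)$, is the delicate step; the computation for $\varphi_1$ and the methods of \cite{arnaudjulien} and \cite{fgps} provide the model to follow.
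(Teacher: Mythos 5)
Your treatment of the mixing bracket is exactly right and is the real content of the paper's (terse) argument: integrating in time and expectation with $f$ frozen produces, via \eqref{inversedeM}, the operator $M^{-1}$ applied to the centered function $q(\li{f}F,\cdot)$, which is a $\psi^{(2)}$-plus-$\psi^{(3)}$ combination covered by \eqref{psi23}, and \eqref{mbound} together with $\|D\varphi(\li{f}F)\|\leq C_{\varphi}(1+\|f\|)$ gives bound $(i)$. What you missed is the structural fact that the construction is designed around: since $n$ and $\MI$ are functions of $x$ alone and $\varphi$ depends on $f$ only through $\li{f}F$, the identity \eqref{moyenformal} and its second-order analogue show that $q(f,n)$ depends on its first argument only through $\li{f}$; moreover $\li{g(t,f)}=\li{f}$ (because $\li{F}=1$), as the paper already uses when deriving $\varphi_1$. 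Hence $q(g(t,f),m(t,n))=q(\li{f}F,m(t,n))$ pathwise, your relaxation bracket is \emph{identically zero} (your exponential estimate of it, though valid, estimates $0$), and the definition collapses to
$$\varphi_2(f,n)=M^{-1}\Bigl[q(\li{f}F,\cdot)-\int_E q(\li{f}F,n')\,\dd\nu(n')\Bigr](n),$$
in which the semigroup $g(t,\cdot)$ --- and with it the nonlinearity $\sigma$ --- no longer appears at all.

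This is not a cosmetic simplification: the step you yourself flag as delicate in your plan for bound $(ii)$, differentiating the flow $f\mapsto g(t,f)$, is not delicate but unavailable. By the explicit formula $g(t,f)=\li{f}F+(f-\li{f}F)e^{-t\sigma(\li{f})}$, differentiating in $f$ requires $\sigma'$, while (H2) only gives $\sigma$ Lipschitz; no care in interchanging $D_f$, $A$ and $\int_0^{\infty}\E\,\dd t$ can repair this, and the paper explicitly warns of it when it explains that the transport terms are handled separately because the nonlinearity $\sigma$ would make $\varphi_2$ ``not behave properly'' otherwise. With the reduction above, $D\varphi_2$ is computed by bilinearity of the $\psi^{(2)}$- and $\psi^{(3)}$-type functionals in their $\LL$ arguments (the uniform bounds \eqref{mbound} pass to the derivative), and $AD\varphi_2$ is controlled exactly as for $\varphi_1$: the spatial derivative coming from $A$ falls either on $n$ or $\MI$, bounded in $W^{1,\infty}$ by \eqref{mbound}, or is absorbed by some $\Lambda\in\mathcal{R}$ through \eqref{smoothtest}, yielding $(ii)$. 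So your proposal secures well-posedness and $(i)$, but stalls at $(ii)$; the missing idea is the conservation law $\li{g(t,f)}=\li{f}$, not a finer analysis of the flow.
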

\noindent The existence of $\varphi_2$ is based on \eqref{psi23} and the bounds \eqref{f2bounds} are proved using \eqref{mbound} and \eqref{smoothtest}.
\subsubsection{Summary}
The correctors $\varphi_1$ and $\varphi_2$ being defined as above in Propositions \ref{f1} and \ref{f2}, we are finally led to
\begin{align*}
\mathscr{L}^{\eps}\varphi^{\eps}(f,n)&=-\frac{1}{\eps}(Af,D\varphi(f))+\int_E(\li{f}Fn,D\varphi_1(\li{f}F))\,\dd \nu(n)\\
&-(Af,D\varphi_1(f))-\eps(Af,D\varphi_2(f))+\eps(fn,D\varphi_2(f)).
\end{align*}
We are now able to define the limit generator $\mathscr{L}$ as, for all $\rho\in L^2(\T^N)$,
\begin{multline}\label{limitg}
\mathscr{L}\varphi(\rho):=(\mathrm{div}_x(\sigma(\rho)^{-1}K\nabla_x\rho)F,D\varphi(\rho F))-\int_E(\rho F n\MI,D\varphi(\rho F))\,\dd \nu(n)\\
-\int_ED^2\varphi(\rho F)(\rho F\MI,\rho F n)\,\dd \nu(n),
\end{multline}
and we have shown the following equality
\begin{equation}\label{aftertwo}
\begin{aligned}
\mathscr{L}^{\eps}\varphi^{\eps}(f,n)&=\mathscr{L}\varphi(\li{f})-\frac{1}{\eps}(Af,D\varphi(f))-(\mathrm{div}_x(\sigma(\li{f})^{-1}K\nabla_x\li{f})F,D\varphi(\li{f}F))\\
&-(Af,D\varphi_1(f))-\eps(Af,D\varphi_2(f))+\eps(fn,D\varphi_2(f)).
\end{aligned}
\end{equation}

\section{Uniform bound in $\LL$}
In this section, we prove a uniform estimate of the $\LL$ norm of the solution $f^{\eps}$ with respect to $\eps$. To do so, 
we will again use the perturbed test functions method. The result is the following:
\begin{prop}
Let $p\geq 1$ and $\fe_0\in \mathrm{D}(A)$. We have the two following bounds
\begin{equation}\label{L2bound}
\E\sup\limits_{t\in[0,T]}\|\fe_t\|^p\lesssim 1,
\end{equation}
\begin{equation}\label{L2boundtemps}
\E\left(\int_0^T\|\sigma^{\frac{1}{2}}(\li{\fe_s})L\fe_s\|^2\,\dd s\right)^p \lesssim \eps^{2p}.
\end{equation}
\end{prop}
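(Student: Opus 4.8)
The plan is to apply the perturbed test functions machinery developed in the previous sections to the specific test function $\varphi(f):=\|f\|^p$ (or rather a smooth regularization thereof), using the identity \eqref{aftertwo} together with the martingale property from Proposition \ref{gene}. The guiding idea is that $\mathscr{L}\varphi$ applied to the limit energy functional should only contain a dissipative diffusion term and a zeroth-order term controllable by Gronwall, while the singular terms in $\mathscr{L}^{\eps}\varphi^{\eps}$ either cancel or are absorbed. First I would choose $\varphi(f)=(1+\|f\|^2)^{p/2}$, check that it is a good test function satisfying the smoothness bounds \eqref{smoothtest}, and form its corrected version $\varphi^{\eps}=\varphi+\eps\varphi_1+\eps^2\varphi_2$ via Propositions \ref{f1} and \ref{f2}. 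The dissipativity estimate \eqref{dissip} is the crucial structural input: when we compute $(\sigma(\li{f})Lf,D\varphi(f))$ against $\varphi(f)=\|f\|^p$, we get a factor $-\|\sigma^{1/2}(\li{f})Lf\|^2$ times a nonnegative weight, which is exactly the dissipation we need for \eqref{L2boundtemps}.

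Next I would write down the martingale identity
$$\varphi^{\eps}(f^{\eps}_t,m^{\eps}_t)=\varphi^{\eps}(f^{\eps}_0,m^{\eps}_0)+\int_0^t\mathscr{L}^{\eps}\varphi^{\eps}(f^{\eps}_s,m^{\eps}_s)\,ds+M^{\eps}_{\varphi^{\eps}}(t),$$
and substitute the expansion \eqref{aftertwo}. The transport term $-\frac{1}{\eps}(Af,D\varphi(f))$ is the dangerous singular term, but since $\varphi$ depends only through the norm and $A$ is skew-adjoint, one expects $(Af,D\varphi(f))$ to vanish or telescope (for $\varphi=\|f\|^2$ one has $(Af,f)=0$ by skew-adjointness of $A$, so this term is identically zero); this is what lets us retain the uniform bound. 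The remaining terms $\mathscr{L}\varphi(\li{f})$, the diffusion correction, $(Af,D\varphi_1)$, and the $O(\eps)$ pieces are controlled by \eqref{f1bounds}, \eqref{f2bounds}, (H1), and \eqref{mbound}, yielding a bound of the form $|\mathscr{L}^{\eps}\varphi^{\eps}|\lesssim (1+\|f\|^p)$ away from the dissipation, plus the genuinely negative dissipative contribution. Taking expectation, using that the martingale has zero mean, and applying Gronwall would give \eqref{L2bound}; the dissipative term, having the correct sign, would then be moved to the left-hand side and, after rescaling by the powers of $\eps$ that appear in front of it, would deliver the $\eps^{2p}$ estimate \eqref{L2boundtemps}.

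To reach the supremum-in-time bound \eqref{L2bound} rather than merely a bound on $\E\|f^{\eps}_t\|^p$, I would invoke the Burkholder--Davis--Gundy inequality on the martingale $M^{\eps}_{\varphi^{\eps}}$, whose quadratic variation is given by the formula in Proposition \ref{gene}, namely $\langle M^{\eps}_{\varphi^{\eps}}\rangle_t=\int_0^t(\mathscr{L}^{\eps}|\varphi^{\eps}|^2-2\varphi^{\eps}\mathscr{L}^{\eps}\varphi^{\eps})\,ds$. Estimating this bracket using \eqref{MIcarre} and the smoothness bounds, one controls $\E\sup_t|M^{\eps}_{\varphi^{\eps}}(t)|$ by the square root of the integrated bracket, which is again absorbed into the Gronwall argument. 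The corrector terms $\eps\varphi_1+\eps^2\varphi_2$ only perturb $\varphi$ by $O(\eps)(1+\|f\|)^2$ by \eqref{f1bounds}(i) and \eqref{f2bounds}(i), so passing from bounds on $\varphi^{\eps}$ to bounds on $\varphi$ itself costs nothing uniform in $\eps$ (for $\eps$ small, $\varphi^{\eps}\geq \tfrac{1}{2}\varphi - C$).

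The main obstacle I anticipate is the careful bookkeeping of the dissipative term across the two estimates. For \eqref{L2bound} one simply discards the favorable negative term, but for \eqref{L2boundtemps} one must extract it quantitatively with the right power of $\eps$. The subtlety is that in \eqref{aftertwo} the dissipation no longer appears explicitly because the corrections were designed to cancel the singular $\frac{1}{\eps^2}$ contribution; so to see \eqref{L2boundtemps} I expect one must instead work directly from the uncorrected generator of Proposition \ref{gene} applied to $\varphi(f)=\|f\|^2$, where the term $\frac{1}{\eps^2}(\sigma(\li{f})Lf,f)=-\frac{1}{\eps^2}\|\sigma^{1/2}(\li{f})Lf\|^2$ appears with its full singular weight. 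Integrating the martingale identity for this plain quadratic $\varphi$, the dissipation integral comes multiplied by $\eps^{-2}$; moving it to the left and using the already-established uniform bound \eqref{L2bound} on the other terms then forces $\E\int_0^T\|\sigma^{1/2}(\li{f^{\eps}_s})Lf^{\eps}_s\|^2\,ds\lesssim \eps^2$, and the general $p$ follows by the same computation applied to $\|f\|^{2p}$ or by an interpolation/Hölder step. Reconciling the two viewpoints — the corrected expansion for \eqref{L2bound} and the raw generator for \eqref{L2boundtemps} — and tracking all powers of $\eps$ consistently is where the real care is needed.
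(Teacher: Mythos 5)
Your final paragraph correctly identifies the crux --- the dissipation must be kept, not cancelled, so the plain quadratic $\varphi(f)=\frac{1}{2}\|f\|^2$ and the raw generator of Proposition \ref{gene} are the right starting point, and this is indeed how the paper proceeds --- but your two-track plan has a genuine gap in each track. For the first track, Propositions \ref{f1} and \ref{f2} and the expansion \eqref{aftertwo} are only valid for test functions depending on $f$ through $\li{f}F$ alone: that is exactly the content of equation \eqref{formaleq1}, which an energy functional such as $(1+\|f\|^2)^{p/2}$ violates, since for it $(\sigma(\li{f})Lf,D\varphi(f))$ is a nonzero multiple of $-\|\sigma^{\frac{1}{2}}(\li{f})Lf\|^2$. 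So you cannot quote \eqref{aftertwo}, nor the second corrector of Proposition \ref{f2}, to prove \eqref{L2bound}; in fact no second corrector exists or is needed for this estimate.

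For the second track, working ``directly from the uncorrected generator'', the dissipation indeed appears with its full weight $\eps^{-2}$, but so does the noise term $\frac{1}{\eps}(fm^{\eps},f)$, which is of order $\eps^{-1}\|f\|^2$ and is not controlled by \eqref{L2bound} --- which, moreover, you have not yet legitimately established at that point, so the two bounds cannot be decoupled: they must come from a single computation. The paper's resolution is one corrected functional $\varphi^{\eps}(f,n)=\frac{1}{2}\|f\|^2-\eps(f\MI,f)$: the Poisson equation kills $\frac{1}{\eps}(fn,f)$, and the new singular term $\frac{2}{\eps}(\sigma(\li{f})Lf,\MI f)$ produced by the corrector is absorbed by Young's inequality into \emph{half} of the dissipation, yielding the pathwise bound $\mathscr{L}^{\eps}\varphi^{\eps}\lesssim -\frac{1}{2\eps^2}\|\sigma^{\frac{1}{2}}(\li{f})Lf\|^2+\|f\|^2$. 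Your proposal never addresses either of these two $O(\eps^{-1})$ terms. Finally, general $p$ is not obtained by redoing the generator computation for $\|f\|^{2p}$ (whose correctors the assumptions \eqref{psi1}--\eqref{mbound} do not obviously cover): one derives the pathwise Gronwall inequality at level $p=1$, keeping both $\|f^{\eps}_t\|^2$ and the $\eps^{-2}$-weighted dissipation integral on the left-hand side, raises it to the power $p$, and controls $\E\sup_{t}|M^{\eps}(t)|^p$ by Burkholder--Davis--Gundy with bracket $\lesssim\|f\|^4$ from \eqref{mbound} and \eqref{MIcarre} --- considerably lighter bookkeeping than your suggestion.
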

\begin{proof}
We set, for all $f\in\LL$, $\varphi(f):=\frac{1}{2}\|f\|^2$, which is easily seen to be a good test function. Then, with Proposition  \ref{gene}, the fact that $A$ is skew-adjoint, $(\ref{dissip})$, and the fact that $\varphi$ does not depend on $n\in E$, we get for $f\in\mathrm{D}(A)$ and $n\in E$,
\begin{align*}
\mathscr{L}^{\eps}\varphi(f,n)&=-\frac{1}{\eps}(Af,f) + \frac{1}{\eps^2}(\sigma(\li{f})Lf,f)+\frac{1}{\eps}(fn,f)+\frac{1}{\eps^2}M\varphi(f,n)\\
&=-\frac{1}{\eps^2}\|\sigma^{\frac{1}{2}}(\li{f})Lf\|^2+\frac{1}{\eps}(fn,f).
\end{align*}
The first term has a favourable behaviour for our purpose. The second term is more difficult to control and we correct $\varphi$  thanks to the perturbed test-functions method to get rid of it: we recall the formal computations done in Section \ref{subsec:formaltest} and we set $\varphi_1(f,n)=-(f,\MI f)$ and $\varphi^{\eps}:=\varphi(f,n)+\eps\varphi_1$. We can show that $\varphi_1$ is a good test function with, thanks to Proposition \ref{gene},
\begin{align*}\eps\mathscr{L}^{\eps}\varphi_1(f,n)&=-\frac{2}{\eps}(\sigma(\li{f})Lf,\MI f)-2(Af,\MI f) \\
&\qquad -2(fn,\MI f)-\frac{1}{\eps}(fn,f).
\end{align*}

\noindent As a consequence, we are led to
\begin{align*}\mathscr{L}^{\eps}\varphi^{\eps}(f,n)&=-\frac{1}{\eps^2}\|\sigma^{\frac{1}{2}}(\li{f})Lf\|^2-\frac{2}{\eps}(\sigma(\li{f})Lf,\MI f)-2(Af,\MI f) \\
&\qquad -2(fn,\MI f).
\end{align*}
We use \eqref{mbound} and the hypothesis (H1) made on $\sigma$ to bound the second term:
\begin{align*}
\frac{2}{\eps}(\sigma(\li{f})Lf,\MI f)\ &\leq\ 2C_*(\sigma^*)^{\frac{1}{2}}\eps^{-1}\|\sigma^{\frac{1}{2}}(\li{f})Lf\|\|f\|\\
&\ \leq\ \frac{1}{2\eps^2}\|\sigma^{\frac{1}{2}}(\li{f})Lf\|^2+2C_*^2\sigma^*\|f\|^2.
\end{align*}
Furthermore, for the last two terms, we write
\begin{align*}
-2(Af,\MI f)-2(fn,\MI f) & \ =\ (f^2,A\MI)-2(fn,\MI f) \\
&\  \leq\ \|f\|^2\|a\|_{L^{\infty}(V)}C_*+2C_*^2\|f\|^2.
\end{align*}
To sum up, we have proved that 
\begin{equation}\label{lepsborne}
\mathscr{L}^{\eps}\varphi^{\eps}(f,n)\ \lesssim\  -\frac{1}{2\eps^2}\|\sigma^{\frac{1}{2}}(\li{f})Lf\|^2+\|f\|^2.
\end{equation}

\noindent As in Proposition \ref{gene}, since $\varphi^{\eps}$ is a good test function, we now define
$$M^{\eps}(t):=\varphi^{\eps}(f^{\eps}_t,m^{\eps}_t)-\varphi^{\eps}(f^{\eps}_0,m^{\eps}_0)-\int_0^t\mathscr{L}^{\eps}\varphi^{\eps}(f^{\eps}_s,m^{\eps}_s)\,\dd s,$$
which is a continuous and integrable $(\mathcal{F}^{\eps}_t)_{t\geq 0}$ martingale. By definition of $\varphi$, $\varphi^{\eps}$ and $M^{\eps}$, we obtain
$$\frac{1}{2}\|f^{\eps}_t\|^2=\frac{1}{2}\|f^{\eps}_0\|^2-\eps(\varphi_1(f^{\eps}_t,m^{\eps}_t)-\varphi_1(f^{\eps}_0,m^{\eps}_0))+\int_0^t\mathscr{L}^{\eps}\varphi^{\eps}(f^{\eps}_s,m^{\eps}_s)\,\dd s+M^{\eps}(t).$$
Since we have obviously $|\varphi_1(f,n)|\lesssim\|f\|^2$, we can write, with $(\ref{lepsborne})$, 
$$\|f^{\eps}_t\|^2\ \lesssim\ \|f^{\eps}_0\|^2+\eps\|f^{\eps}_t\|+\int_0^t-\frac{1}{2\eps^2}\|\sigma^{\frac{1}{2}}(\li{\fe_s})L\fe_s\|^2+\|\fe_s\|^2\,\dd s+\sup\limits_{t\in[0,T]}|M^{\eps}(t)|,$$
i.e. for $\eps$ sufficiently small,
$$\int_0^t\frac{1}{2\eps^2}\|\sigma^{\frac{1}{2}}(\li{\fe_s})L\fe_s\|^2\,\dd s+\|f^{\eps}_t\|^2\ \lesssim\ \|f^{\eps}_0\|^2+\int_0^t\|f^{\eps}_s\|^2\,ds+\sup\limits_{t\in[0,T]}|M^{\eps}(t)|,$$
and by Gronwall lemma,
\begin{equation}\label{z1}
\int_0^t\frac{1}{2\eps^2}\|\sigma^{\frac{1}{2}}(\li{\fe_s})L\fe_s\|^2\,\dd s+\|f^{\eps}_t\|^2\ \lesssim\ \|f^{\eps}_0\|^2+\sup\limits_{t\in[0,T]}|M^{\eps}(t)|.
\end{equation}
Note that $|\varphi^{\eps}|^2$ is a good test function with, thanks to \eqref{mbound} and \eqref{MIcarre},
$$|\mathscr{L}^{\eps}|\varphi^{\eps}|^2-2\varphi^{\eps}\mathscr{L}^{\eps}\varphi^{\eps}|=|M|\varphi_1|^2-2\varphi_1M\varphi_1|\lesssim \|f\|^4,$$
and that, with Proposition \ref{gene}, the quadratic variation of $M^{\eps}(t)$ is given by 
$$\langle M^{\eps}\rangle_t=\int_0^t(\mathscr{L}^{\eps}|\varphi^{\eps}|^2-2\varphi^{\eps}\mathscr{L}^{\eps}\varphi^{\eps})(f^{\eps}_s,m^{\eps}_s)\,\dd s.$$
As a result, with Burkholder-Davis-Gundy  and Hölder inequalities, we get
\begin{equation}\label{z2}
\E\sup\limits_{t\in[0,T]}|M^{\eps}(t)|^p\ \lesssim\ \E|\langle M^{\eps}\rangle_T|^{\frac{p}{2}}\lesssim \int_0^T\E\|f^{\eps}_s\|^{2p}\,\dd s.
\end{equation}
Neglecting the first (positive) term of the left-hand side in $(\ref{z1})$, we have 
$$\E\|f^{\eps}_t\|^{2p}\ \lesssim\ \E\|f^{\eps}_0\|^{2p}+\E\sup\limits_{t\in[0,T]}|M^{\eps}(t)|^p,
$$
so that we get 
$$\E\|f^{\eps}_T\|^{2p}\ \lesssim\ \E\|f^{\eps}_0\|^{2p}+\int_0^T\E\|f^{\eps}_s\|^{2p}\,\dd s,
$$
and, by Gronwall lemma,
\begin{equation}\label{z3}
\E\|f^{\eps}_T\|^{2p}\ \lesssim\ \E\|f^{\eps}_0\|^{2p}.
\end{equation}
This actually holds true for any $t\in[0,T]$. Thus, using $(\ref{z2})$ and $(\ref{z3})$ in $(\ref{z1})$ finally gives the expected bounds. 
\end{proof}
\begin{remark}
We define $g^{\eps}:=\fe - \re F = - L\fe$. Since we have $\sigma\geq \sigma_*$, the bound $(\ref{L2boundtemps})$ gives that, for all $p\geq 1$,
\begin{equation}\label{gepsbound}
(\eps^{-1}g^{\eps})_{\eps>0} \text{ is bounded in } L^p(\Omega;L^2(0,T;\LL)).
\end{equation}
\end{remark}
In the sequel, we must deal with the non-linear term. To do so, we need some compactness in the space variable of the process $(\re)_{\eps>0}$. The following proposition is a first step to this purpose.

\begin{prop}
We assume that hypothesis \eqref{nondegenlemmemoy} is satisfied. Let $p\geq 1$ and $s\in(0,\theta/2)$. We have the bound
\begin{equation}\label{Hsbound}
\E\left(\int_0^T\|\re_s\|^2_{H^s(\T^N)}\,\dd s\right)^p \lesssim 1.
\end{equation}
\end{prop}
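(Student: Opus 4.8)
The plan is to recast \eqref{rt} as a kinetic transport equation with an $L^2$ right-hand side and then invoke an $L^2$ averaging lemma (in the spirit of Jabin \cite{jabin}) pathwise, before taking moments. First I would use $L\fe=-\gep$, with $\gep=\fe-\re F$, to rewrite the collision term as $\frac{1}{\eps^2}\sigma(\re)L\fe=-\frac{1}{\eps}\sigma(\re)(\eps^{-1}\gep)$. Multiplying \eqref{rt} by $\eps$ then gives
\[
\eps\,\partial_t\fe+a(v)\cdot\nabla_x\fe=H^{\eps},\qquad H^{\eps}:=-\sigma(\re)(\eps^{-1}\gep)+\fe\,m^{\eps}.
\]
The point of this reformulation is that $H^{\eps}$ is bounded, uniformly in $\eps$, in $L^p(\Omega;L^2(0,T;\LL))$: the first term is controlled by the upper bound $\sigma^*$ from (H1) together with \eqref{gepsbound}, and for the second term $\|\fe_s m^{\eps}_s\|\leq\|m^{\eps}_s\|_{\infty}\|\fe_s\|\leq C_*\|\fe_s\|$, so that $\int_0^T\|\fe_s m^{\eps}_s\|^2\,\dd s\lesssim \sup_{s}\|\fe_s\|^2$ has all its moments bounded by \eqref{L2bound}. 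Since $\sup_v F(v)<\infty$, the weighted norm $\|\cdot\|$ dominates the plain $L^2(V,\dd\mu)$ norm, so both $\fe$ and $H^{\eps}$ are bounded in $L^p(\Omega;L^2((0,T)\times\T^N\times V))$.

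With the equation in the form $\eps\partial_t\fe+a(v)\cdot\nabla_x\fe=H^{\eps}$ and both $\fe,H^{\eps}$ in $L^2$, I would apply an $L^2$ averaging lemma to the velocity average $\re=\li{\fe}$. After a routine localization in time (extend the solution slightly past $T$, multiply by a smooth temporal cut-off $\chi$; the commutator $\eps\chi'\fe$ is of lower order and uniformly bounded in $L^2$), we may assume all functions are defined on $\R_t$ and pass to the Fourier variables $(\tau,\xi)$ dual to $(t,x)$. The transformed equation reads $i\big(\eps\tau+a(v)\cdot\xi\big)\widehat{\fe}(\tau,\xi,v)=\widehat{H^{\eps}}(\tau,\xi,v)$, and the average is estimated frequency by frequency by splitting the $v$-integral into the resonant set $R_\delta=\{v:\ |\eps\tau+a(v)\cdot\xi|<\delta|\xi|\}$ and its complement. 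On $R_\delta$ one uses Cauchy--Schwarz and the $L^2$ bound on $\fe$ itself, the measure being small by \eqref{nondegenlemmemoy}; on the complement one inverts the symbol, gaining a factor $(\delta|\xi|)^{-1}$ applied to the $L^2$ bound on $H^{\eps}$. Optimising over $\delta$ then produces a gain of $|\xi|^{2s}$ for any $s<\theta/2$, i.e. a pathwise estimate
\[
\int_0^T\|\re_s\|^2_{H^s(\T^N)}\,\dd s\ \lesssim\ \|\fe\|^2_{L^2((0,T)\times\T^N\times V)}+\|H^{\eps}\|^2_{L^2((0,T)\times\T^N\times V)}.
\]

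The crucial feature, and the step I expect to be the main obstacle, is that this estimate must be uniform in $\eps$ despite the singular-looking factor $\eps$ in front of $\partial_t$. This is precisely why \eqref{nondegenlemmemoy} is imposed with the shift $\alpha$: writing $\eps\tau+a(v)\cdot\xi=|\xi|\big(a(v)\cdot\tfrac{\xi}{|\xi|}+\alpha\big)$ with $\alpha=\eps\tau/|\xi|$, the bound $\mu(R_\delta)\leq\delta^{\theta}$ holds uniformly over all shifts $\alpha\in\R$, so the whole argument is insensitive to the value of $\eps\tau/|\xi|$ and the constant in the averaging estimate does not degenerate as $\eps\to0$. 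The second, milder, difficulty is that the averaging lemma is a deterministic statement: I would apply it for almost every $\omega$ and only then raise the pathwise inequality to the power $p$ and take $\E$. Using that both $\E\big(\int_0^T\|\fe_s\|^2_{L^2(\T^N\times V)}\dd s\big)^p$ and $\E\big(\int_0^T\|H^{\eps}_s\|^2_{L^2(\T^N\times V)}\dd s\big)^p$ are $\lesssim1$ by the first paragraph then yields \eqref{Hsbound}.
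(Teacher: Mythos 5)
Your proof is correct and follows essentially the same route as the paper: you recast \eqref{rt} as $\eps\partial_t\fe+a(v)\cdot\nabla_x\fe=H^{\eps}$ with $H^{\eps}=-\sigma(\re)(\eps^{-1}\gep)+\fe m^{\eps}$ bounded in $L^p(\Omega;L^2(0,T;\LL))$ via (H1), \eqref{gepsbound} and \eqref{L2bound}, apply an $L^2$ averaging lemma pathwise under \eqref{nondegenlemmemoy}, and only then take $p$-th moments --- exactly the paper's argument, which simply cites \cite[Theorem 3.1]{jabin} at the point where you unfold the standard Fourier resonant/non-resonant splitting. Your additional remarks (time localization by cut-off, passing from the $F^{-1}$-weighted norm to plain $L^2(\dd\mu)$ via $\sup_v F<\infty$, and uniformity in $\eps$ through the shift $\alpha=\eps\tau/|\xi|$ in \eqref{nondegenlemmemoy}) are correct elaborations of steps the paper leaves implicit.
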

\begin{proof}
Note that with $\sigma\leq\sigma^*$, the remark $(\ref{gepsbound})$ and equation $(\ref{rt})$, we observe that 
$$
(\eps\partial_t \fe + a(v)\cdot\nabla_x \fe - \fe m^{\eps})_{\eps>0} \text{ is bounded in } L^p(\Omega;L^2(0,T;\LL)).
$$
Furthermore, $(\fe)_{\eps>0}$ is bounded in $L^p(\Omega;L^2(0,T;\LL))$ with $(\ref{L2bound})$ and $|m^{\eps}|\leq C_*$ so that
\begin{equation}\label{epsdtborne}
(\eps\partial_t \fe + a(v)\cdot\nabla_x \fe)_{\eps>0} \text{ is bounded in } L^p(\Omega;L^2(0,T;\LL)).
\end{equation}
Then, thanks to \eqref{nondegenlemmemoy}, we apply an averaging lemma to conclude. Precisely, \cite[Theorem 3.1]{jabin} in the unstationary case applies a.s. with $\beta=\gamma=0$, $p_1=q_1=p_2=q_2=2$, $a=0$, $k=\theta$ and
$$f = \fe,\qquad g=\eps\partial_t \fe + a(v)\cdot\nabla_x \fe,$$
and gives the bound
$$
\|\re\|_{B^{\frac{\theta}{2},2}_{\infty,\infty}}\leq C \|\fe\|^{\frac{1}{2}}\|\eps\partial_t \fe + a(v)\cdot\nabla_x \fe\|^{\frac{1}{2}}\quad \text{a.s.}
$$ Since, for any $s<\theta/2$, $H^{s}\subset B^{\frac{\theta}{2}}_{\infty,\infty}$, it yields, for $p \geq 1$,
$$
\E\left(\int_0^T\|\re_s\|^2_{H^s}\,\dd s\right)^p \leq C \E\left(\int_0^T\|\fe_s\|\|\eps\partial_t \fe_s + a(v)\cdot\nabla_x \fe_s\|\,\dd s\right)^p,
$$
so that the result follows with Cauchy Schwarz inequality and \eqref{L2bound} and \eqref{epsdtborne}. This concludes the proof.
\end{proof}

\section{Tightness}\label{tension}

We want to prove the convergence in law of the family $(\re)_{\eps>0}$: in this section, we study the tightness of the processes  $(\re)_{\eps>0}$ in the space $C([0,T],H^{-\eta}(\T^N))$ where $\eta>0$. In fact, this will not be sufficient to pass to the limit in the non-linear term. As a consequence, we also prove that $(\re)_{\eps>0}$ is tight in the space $L^2(0,T;L^2(\T^N))$.
\begin{prop}\label{tightness} Let $\eta>0$. Then the sequence $(\rho^{\eps})_{\eps>0}$ is tight in the spaces $C([0,T],H^{-\eta}(\T^N))$ and $L^2(0,T;L^2(\T^N))$.
\end{prop}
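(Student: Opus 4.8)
The plan is to establish tightness in the two spaces separately, since they require different tools but share the same underlying estimates. For the space $C([0,T],H^{-\eta}(\T^N))$, I would use a criterion of Aldous--Kurtz type: it suffices to show uniform estimates on the moments of $\re$ in a space compactly embedded in $H^{-\eta}$ (for spatial compactness) together with a modulus-of-continuity / equicontinuity control in time. The spatial regularity is already provided by the bound \eqref{Hsbound}, which controls $\re$ in $L^2(0,T;H^s)$ with $s\in(0,\theta/2)$, and since $H^s\hookrightarrow H^{-\eta}$ compactly, this handles the compactness in the space variable. For the time regularity in the weak topology $H^{-\eta}$, the natural route is to test $\re$ against a smooth spatial test function $\phi$ and study the real-valued process $t\mapsto(\re_t,\phi)_{L^2_x}$. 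Applying Proposition~\ref{gene} to the good test function $\varphi(f)=(\li f,\phi)_{L^2_x}$ (suitably corrected via the perturbed test-function machinery of Section~\ref{subsec:formaltest}) yields a decomposition of this process into a bounded-variation drift part plus a martingale part, both controlled in expectation by the uniform $\LL$ bound \eqref{L2bound}. A fractional Sobolev-in-time estimate (e.g. a uniform bound on $\E\|(\re,\phi)\|^p_{W^{\alpha,p}(0,T)}$ for some small $\alpha>0$) then gives the required time-equicontinuity, and one concludes tightness in $C([0,T],H^{-\eta})$ via the Arzelà--Ascoli--type characterization combined with a separability/countable-dense-test-function argument to upgrade from scalar projections to the genuine $H^{-\eta}$-valued path space.

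For tightness in $L^2(0,T;L^2(\T^N))$ the strategy is different, and this is where the new averaging-lemma input is essential. The key observation is that a bounded set of $L^2(0,T;H^s(\T^N))$ is not compact in $L^2(0,T;L^2(\T^N))$ because of the lack of time compactness, so one needs an additional estimate controlling the oscillation of $\re$ in time. The plan is to invoke a compactness criterion in $L^2(0,T;L^2)$ of Aubin--Lions--Simon flavour, adapted to the stochastic setting: one combines the spatial regularity \eqref{Hsbound} with a fractional time-regularity estimate for $\re$ valued in a negative Sobolev space. Concretely, I would prove that $\E\|\re\|^p_{W^{\alpha,p}(0,T;H^{-\beta})}\lesssim 1$ for suitable small $\alpha,\beta>0$, again by writing the equation satisfied by $(\re,\phi)$ through the generator computation and extracting the drift and martingale contributions. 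The interpolation between the positive spatial regularity $H^s$ and the negative time-fractional regularity then furnishes compactness in $L^2(0,T;L^2)$ through a probabilistic version of the Aubin--Lions lemma, and uniform $p$-th moment bounds upgrade this to tightness of the laws.

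The main obstacle will be establishing the time-regularity estimate in the negative Sobolev space in a way that is uniform in $\eps$, because the generator \eqref{afterone}--\eqref{aftertwo} contains the singular transport term $-\tfrac{1}{\eps}(Af,D\varphi(f))$. For scalar test functions depending only on $\li f F$ one expects this term to be benign after averaging (the null-flux hypothesis \eqref{nullflux} ensures $\li{Af}$ carries no zeroth-order singularity), but making this rigorous requires carefully tracking how the corrected test function $\varphi^{\eps}=\varphi+\eps\varphi_1+\eps^2\varphi_2$ absorbs the singular contributions, exactly as in \eqref{aftertwo}, and then controlling the residual $O(\eps)$ and $O(1)$ terms by the uniform bounds \eqref{L2bound}, \eqref{f1bounds} and \eqref{f2bounds}. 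The martingale part is handled by the quadratic-variation formula of Proposition~\ref{gene} together with Burkholder--Davis--Gundy, as in \eqref{z2}. Once these uniform fractional-in-time moment bounds are in hand, the two tightness statements follow from the respective compactness criteria, and I would expect the verification that the relevant embeddings are compact (using \eqref{Hsbound} for the spatial gain) to be routine.
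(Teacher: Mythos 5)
Your overall architecture is the same as the paper's (perturbed test functions for time equicontinuity, a fractional Sobolev-in-time embedding, Ascoli plus Prokhorov for $C([0,T],H^{-\eta})$, and a Simon-type criterion combining \eqref{Hsbound} with the $H^{-\eta}$ modulus of continuity for $L^2(0,T;L^2)$), but there is a genuine gap in your treatment of the singular transport term $-\frac{1}{\eps}(Af,D\varphi(f))$. You propose to let the correctors $\varphi^{\eps}=\varphi+\eps\varphi_1+\eps^2\varphi_2$ absorb it ``exactly as in \eqref{aftertwo}'' --- but in \eqref{aftertwo} that term is \emph{not} absorbed: it survives verbatim on the right-hand side, and the paper explicitly warns at the end of Section \ref{subsec:formaltest} that trying to correct the transport terms is precisely what fails in this nonlinear setting, since the nonlinearity $\sigma$ ruins the behaviour of the second corrector. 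The mechanism actually used is different. For the linear test functions $\varphi_j(f)=(f,p_jF)$, the null-flux condition \eqref{nullflux} yields the identity $(A\fe,p_jF)=(A\gep,p_jF)$ with $\gep:=\fe-\re F$, so the singular term equals $(A(\eps^{-1}\gep),p_jF)$, and it is the dissipation estimate \eqref{L2boundtemps}, in the form \eqref{gepsbound} (boundedness of $(\eps^{-1}\gep)_{\eps>0}$ in $L^p(\Omega;L^2(0,T;\LL))$), that renders it $O(1)$ inside the fourth-moment increment bound $\E|\theta_j^{\eps}(t)-\theta_j^{\eps}(s)|^4\lesssim (1+|j|^4)|t-s|^2$. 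Your remark that the null flux makes $\li{Af}$ carry no zeroth-order singularity is the right instinct, but without invoking \eqref{gepsbound}/\eqref{L2boundtemps} --- which appear nowhere in your plan --- the increment estimate cannot be closed and the entire time-regularity step collapses. Note also that the paper takes $\varphi_2\equiv 0$ here: since $\varphi_j$ is linear, only the first corrector is needed, which keeps every bound linear in $\|f\|$ and avoids the $\varphi_2$ difficulties altogether.

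A secondary slip: for the tightness in $C([0,T],H^{-\eta})$ you attribute the spatial compactness to \eqref{Hsbound}, but that is only an $L^2$-in-time bound and cannot control the pointwise-in-time values that the Ascoli criterion requires. What the paper uses is the sup-in-time moment bound \eqref{L2bound}, giving $\PP(\sup_{t\in[0,T]}\|\re_t\|_{L^2}>R)\lesssim R^{-1}$, combined with the compact embedding $L^2(\T^N)\subset H^{-\eta}(\T^N)$; the averaging-lemma bound \eqref{Hsbound} enters only in Step 3, for the $L^2(0,T;L^2)$ tightness via Simon's theorem. Finally, your ``routine'' upgrade from scalar projections to the $H^{-\eta}$-valued path space does require tracking the growth of the constants in $j$: the paper gets $C_j\leq 1+|j|$ and must choose $\gamma>N/2+2$ so that the series $\sum_j\theta_j^{\eps}(t)J^{\gamma}p_j$ converges and the increment bounds sum; this, together with the interpolation argument reducing general $\eta$ to $\gamma$, should be spelled out rather than waved through.
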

\begin{proof}

\noindent
\textit{Step 1: control of the modulus of continuity of $\re$ in $H^{-\eta}(\T^N)$.} Let $\eta>0$ be fixed. For any $\delta>0$, we define $$w(\rho,\delta):=\sup\limits_{|t-s|<\delta}\|\rho(t)-\rho(s)\|_{H^{-\eta}(\T^N)}$$
the modulus of continuity of a function $\rho\in C([0,T],H^{-\eta}(\T^N))$. In this first step of the proof, we want to obtain the following bound 
\begin{equation}\label{controlw}
\E w(\re,\delta)\lesssim \eps + \delta^{\tau},
\end{equation}
for some positive $\tau$. To do so, we use the perturbed test-functions method. Let $(p_j)_{j\in\N^N}$ the Fourier orthonormal basis of $L^2(\T^N)$ and $J$ the operator
$$J:=(\textrm{I}-\Delta_x)^{-\frac{1}{2}}.$$ Let $j\in\N^N$. We set $$\varphi_j(f):=(f,p_jF), \quad f\in\LL,$$ and we define the first order corrections by, see Section \ref{subsec:formaltest},
$$\varphi_{1,j}(f,n):=-(f\MI ,p_jF), \quad (f,n)\in\LL\times E.$$
We finally define $\varphi^{\eps}_j:=\varphi_j+\eps\varphi_{1,j}$, which is easily seen to be a good test-function, so that, thanks to Proposition \ref{gene}, we consider the continuous martingales
$$M^{\eps}_j(t):=\varphi^{\eps}_j(f^{\eps}_t,m^{\eps}_t)-\varphi^{\eps}_j(f^{\eps}_0,m^{\eps}_0)-\int_0^t\mathscr{L}^{\eps}\varphi^{\eps}_j(f^{\eps}_s,m^{\eps}_s)\,\dd s.$$
We also define,
$$\theta_j^{\eps}(t):=\varphi_j(f^{\eps}_0)+\int_0^t\mathscr{L}^{\eps}\varphi^{\eps}_j(f^{\eps}_s,m^{\eps}_s)\,\dd s+M_{j}^{\eps}(t).$$
Note that 
\begin{equation}\label{thetaj}
\theta_j^{\eps}(t)=\varphi_j(f^{\eps}_t)+\eps(\varphi_{1,j}(f^{\eps}_t,m^{\eps}_t)-\varphi_{1,j}(f^{\eps}_0,m^{\eps}_0)),
\end{equation}
so that, with the definitions of $\varphi_j$ and $\varphi_{1,j}$, Cauchy-Schwarz inequality, we easily get
$$|\theta_j^{\eps}(t)|\lesssim\sup\limits_{t\in[0,T]}\|f^{\eps}(t)\|\|p_j\|_{L^2_x}=\sup\limits_{t\in[0,T]}\|f^{\eps}(t)\|.$$
Hence, by the uniform $\LL$ bound $(\ref{L2bound})$,
\begin{equation}\label{bornethetajeps}\E\sup\limits_{t\in [0,T]}\left|\theta_j^{\eps}(t)\right|\lesssim 1.\end{equation}
With $(\ref{thetaj})$ and the uniform $\LL$ bound $(\ref{L2bound})$, we also deduce
\begin{equation}\label{bornethetajmoins}\E\sup\limits_{t\in [0,T]}\left|\varphi_j(\re_t)-\theta_j^{\eps}(t)\right|\lesssim \eps.\end{equation}
From now on, we fix $\gamma > N/2 + 2$ and we remark that, by $(\ref{bornethetajeps})$, a.s. and for all $t\in[0,T]$, the series defined by $u^{\eps}_t:=\sum_{j\in\N^N}\theta^{\eps}_j(t)J^{\gamma}p_j$ converges in $L^2(\T^N)$. We then set $$\theta^{\eps}(t):=J^{-\gamma}\sum\limits_{j\in\N^N}\theta^{\eps}_j(t)J^{\gamma}p_j,$$
which exists a.s. and for all $t\in[0,T]$ in $H^{-\gamma}(\T^N)$. And with $(\ref{bornethetajmoins})$, we obtain
\begin{equation}\label{bornethetamoins}\E\sup\limits_{t\in [0,T]}\left\|\re(t)-\theta^{\eps}(t)\right\|_{H^{-\gamma}(\T^N)}\lesssim \eps.\end{equation}
\noindent Actually, by interpolation, the continuous embedding $L^2(\T^N)\subset H^{-\eta}(\T^N)$ and the uniform $\LL$ bound $(\ref{L2bound})$, we have 
$$\E\sup\limits_{|t-s|<\delta}\|\rho(t)-\rho(s)\|_{H^{-\eta^{\flat}}}\leq\E\sup\limits_{|t-s|<\delta}\|\rho(t)-\rho(s)\|^{\upsilon}_{H^{-\eta^{\sharp}}}$$
for a certain $\upsilon>0$ if $\eta^{\sharp}>\eta^{\flat}>0$. As a result, it is indeed sufficient to work with $\eta=\gamma$. In view of $(\ref{bornethetamoins})$, we first want to obtain an estimate of the increments of $\theta^{\eps}$. We have, for $j\in\N^N$ and $0\leq s\leq t\leq T$,
\begin{equation}\label{accroisstheta}
\theta^{\eps}_j(t)-\theta^{\eps}_j(s)=\int_s^t\mathscr{L}^{\eps}\varphi_j^{\eps}(f^{\eps}_{\sigma},m^{\eps}_{\sigma})\,\dd \sigma+M^{\eps}_j(t)-M^{\eps}_j(s).
\end{equation}
We then control the two terms on the right-hand side of $(\ref{accroisstheta})$. Let us begin with the first one. Note that, since $D\varphi_j(f)\equiv p_j F$ and $D\varphi_{1,j}(f)\equiv -\MI p_j F$, we obtain thanks to \eqref{afterone} with $\varphi_2\equiv 0$,
$$\mathscr{L}^{\eps}\varphi^{\eps}_j(\fe_{\sigma},m^{\eps}_{\sigma})=-\frac{1}{\eps}(A\fe_{\sigma},p_jF) + (A\fe_{\sigma},M^{\!-\! 1}\!I(m^{\eps}_{\sigma}) p_jF) - (\fe_{\sigma}m^{\eps}_{\sigma},M^{\!-\! 1}\!I(m^{\eps}_{\sigma}) p_jF).$$
Since, with $(\ref{nullflux})$, we have $\li{a(v)\fe_{\sigma}}=\li{a(v)g^{\eps}_{\sigma}}$ where $g^{\eps}$ has been defined previously as $g^{\eps}:=\fe-\re F$, we can write
\begin{align*}
(A\fe_{\sigma},p_jF)&=\int_{\T^N}\mathrm{div}_x(\li{a(v)\fe_{\sigma}})p_j\,\dd x = \int_{\T^N}\mathrm{div}_x(\li{a(v)\gep_{\sigma}})p_j\,\dd x = (A\gep_{\sigma},p_jF)
\end{align*}
and, as a consequence, since $a$ is bounded, we are led to
$$\frac{1}{\eps}(A\fe_{\sigma},p_jF)\ \lesssim\ \|\eps^{-1}g^{\eps}_{\sigma}\|\|\nabla_xp_j\|_{L^2}. $$
Similarly, we can show that
$$(A\fe_{\sigma},M^{\!-\! 1}\!I(m^{\eps}_{\sigma}) p_jF)\ \lesssim\ \|g^{\eps}_{\sigma}\|(1+\|\nabla_xp_j\|_{L^2}).$$
Since we have obviously $(\fe_{\sigma}m^{\eps}_{\sigma},M^{\!-\! 1}\!I(m^{\eps}_{\sigma}) p_jF)\lesssim \|\fe_{\sigma}\|$, we can conclude that
\begin{equation}\label{controlL}
|\mathscr{L}^{\eps}\varphi^{\eps}_j(\fe_{\sigma},m^{\eps}_{\sigma})|\lesssim C_j \left[\|\eps^{-1}g^{\eps}_{\sigma}\|+\|g^{\eps}_{\sigma}\|+\|\fe_{\sigma}\|\right],
\end{equation}
where $C_j:=1+ \|\nabla_xp_j\|_{L^2} \leq 1+|j|$.
Thanks to $(\ref{L2bound})$ and $(\ref{gepsbound})$ with $p=4$, we have that $(\eps^{-1}g^{\eps})_{\eps>0}$, $(g^{\eps})_{\eps>0}$ and $(\fe)_{\eps>0}$ are bounded in $L^4(\Omega;L^2(0,T;\LL))$. As a consequence, $(\ref{controlL})$ and an application of H\"{o}lder's inequality gives
$$\E\left|\int_s^t\mathscr{L}^{\eps}\varphi_{j}^{\eps}(f^{\eps}_{\sigma},m^{\eps}_{\sigma})\,d\sigma\right|^4\lesssim C_j^4|t-s|^2.$$
Furthermore, using Burkholder-Davis-Gundy inequality, we can control the second term of the right-hand side of $(\ref{accroisstheta})$ as
$$\E|M^{\eps}_j(t)-M^{\eps}_j(s)|^4\lesssim \E|\langle M^{\eps}_j\rangle_t-\langle M^{\eps}_j\rangle_s|^2,$$
where the quadratic variation $\langle M^{\eps}_j\rangle$ is given by
$$\langle M^{\eps}_j\rangle_t=\int_0^t(M|\varphi_{1,j}|^2-2\varphi_{1,j}M\varphi_{1,j})(\fe_s,m^{\eps}_s)\,\dd s.$$
With the definition of $\varphi_{1,j}$, \eqref{mbound}, \eqref{MIcarre} and the uniform $\LL$ bound $(\ref{L2bound})$, it is now easy to get
$$\E|M^{\eps}_j(t)-M^{\eps}_j(s)|^4\lesssim |t-s|^2.$$
Finally we have $\E|\theta^{\eps}_j(t)-\theta^{\eps}_j(s)|^4\lesssim (1+|j|^4)|t-s|^2$. Since we took $\gamma>N/2+2$, we can conclude that 
$$\E\|\theta^{\eps}(t)-\theta^{\eps}(s)\|_{H^{-\gamma}(\T^N)}^4\lesssim |t-s|^2.$$
It easily follows that, for $\upsilon<1/2$,
$$\E\|\theta^{\eps}\|_{W^{\upsilon,4}(0,T,H^{-\gamma}(\T^N))}^4\lesssim 1$$
and by the embedding 
$$W^{\upsilon,4}(0,T,H^{-\gamma}(\T^N))\subset \mathcal{C}^{\tau}(0,T,H^{-\gamma}(\T^N)),\quad \tau<\upsilon-\frac{1}{4},$$
we obtain that $\E w(\theta^{\eps},\delta)\lesssim \delta^{\tau}$ for a certain positive $\tau$. Finally, with $(\ref{bornethetamoins})$, we can now conclude the first step of the proof since
\begin{equation}
\E w(\rho^{\eps},\delta)\leq 2\E\sup\limits_{t\in [0,T]}\left\|\rho^{\eps}_t-\theta^{\eps}_t\right\|_{H^{-\gamma}(\T^N)}+\E w(\theta^{\eps},\delta)\lesssim \eps + \delta^{\tau}.
\end{equation}

\textit{Step 2: tightness in $C([0,T];H^{-\eta}(\T^N))$.} Since the embedding $L^2(\T^N)\subset H^{-\eta}(\T^N)$ is compact, and by Ascoli's Theorem, the set 
$$K_R:=\left\lbrace\rho\in C([0,T],H^{-\eta}(\T^N)),\; \sup\limits_{t\in[0,T]}\|\rho\|_{L^2(\T^N)}\leq R,\; w(\rho,\delta)<\eps(\delta)\right\rbrace,$$
where $R>0$ and $\eps(\delta)\to 0$ when $\delta\to 0$, is compact in $C([0,T],H^{-\eta}(\T^N))$. By Prokohrov's Theorem, the tightness of $(\re)_{\eps>0}$ in $C([0,T],H^{-\eta}(\T^N))$ will follow if we prove that for all $\sigma>0$, there exists $R>0$ such that \begin{equation}\label{tight1}\PP(\sup\limits_{t\in[0,T]}\|\rho^{\eps}\|_{L^2(\T^N)}>R)<\sigma,
\end{equation}
and
\begin{equation}\label{tight2}\lim\limits_{\delta\to 0}\limsup\limits_{\eps\to 0}\PP(w(\rho^{\eps},\delta)>\sigma)=0.
\end{equation}
With Markov's inequality and the uniform $\LL$ bound $(\ref{L2bound})$, we have
$$\PP(\sup\limits_{t\in[0,T]}\|\rho^{\eps}\|_{L^2(\T^N)}>R)\ \leq\ \PP(\sup\limits_{t\in[0,T]}\|f^{\eps}\|>R)\ \lesssim\ R^{-1},$$
which gives $(\ref{tight1})$. And we deduce $(\ref{tight2})$ by Markov's inequality and the bound $(\ref{controlw})$ since 
\begin{align*}
\lim\limits_{\delta\to 0}\limsup\limits_{\eps\to 0}\PP(w(\rho^{\eps},\delta)>\sigma)&\ \leq\ \lim\limits_{\delta\to 0}\limsup\limits_{\eps\to 0}\sigma^{-1}\E w(\rho^{\eps},\delta)\\
&\ \lesssim\ \lim\limits_{\delta\to 0}\limsup\limits_{\eps\to 0}\sigma^{-1}(\eps + \delta^{\tau})\ =\ 0.
\end{align*}

\textit{Step 3: tightness in $L^2(0,T;L^2(\T^N))$.} Similarly, due to \cite[Theorem 5]{simon}, the set 
$$K_R:=\left\lbrace\rho\in L^2(0,T;L^2(\T^N)),\; \int_0^T\!\!\|\rho_t\|^2_{H^s(\T^N)}\dd t\leq R,\; w(\rho,\delta)<\eps(\delta)\right\rbrace,$$
where $R>0$, $s>0$ and $\eps(\delta)\to 0$ when $\delta\to 0$, is compact in $L^2(0,T;L^2(\T^N))$. By Prokhorov's Theorem, the  tightness of $(\re)_{\eps>0}$ in $L^2(0,T;L^2(\T^N))$ will follow if we prove that for all $\sigma>0$, there exists $R>0$ such that \begin{equation}\label{tight11}\PP(\int_0^T\!\!\|\rho_t\|^2_{H^s(\T^N)}\dd t>R)<\sigma,
\end{equation}
and
\begin{equation}\label{tight22}\lim\limits_{\delta\to 0}\limsup\limits_{\eps\to 0}\PP(w(\rho^{\eps},\delta)>\sigma)=0.
\end{equation}
But $(\ref{tight11})$ and $(\ref{tight22})$ are consequences of Markov's inequality and the bounds $(\ref{Hsbound})$ with $p=1$  and $(\ref{controlw})$ so that the proof is complete.
\end{proof}

\section{Convergence}

We conclude here the proof of Theorem \ref{mainresult}. The idea is now, by the tightness result and Prokhorov Theorem, to take a subsequence of $(\rho^{\eps})_{\eps>0}$ that converges in law to some probability measure. Then we show that this limiting probability is actually uniquely determined by the limit generator $\mathscr{L}$ defined above.\medskip

\noindent We fix $\eta>0$. By Proposition $\ref{tightness}$ and Prokhorov's Theorem, there is a subsequence of $(\rho^{\eps})_{\eps >0}$, still denoted $(\rho^{\eps})_{\eps >0}$, and a probability measure $P$ on the spaces $C([0,T],H^{-\eta})$ and $L^2(0,T;L^2)$ such that
$$P^{\eps}\to P\text{ weakly in } C([0,T],H^{-\eta}) \text{ and }L^2(0,T;L^2),$$
where $P^{\eps}$ stands for the law of $\rho^{\eps}$. We now identify the probability measure $P$.\medskip

\noindent Since the spaces $C([0,T],H^{-\eta})$ and $L^2(0,T;L^2)$ are separable, we can apply Skohorod representation Theorem \cite{billingsley}, so that there exists a new probability space $(\widetilde{\Omega},\widetilde{\mathcal{F}},\widetilde{\PP})$ and random variables $$\widetilde{\re},\widetilde{\rho}:\widetilde{\Omega}\to C([0,T],H^{-\eta})\cap L^2(0,T;L^2),$$ with respective law $P^{\eps}$ and $P$ such that $\widetilde{\rho^{\eps}}\to \widetilde{\rho}$ in $C([0,T],H^{-\eta})$ and $L^2(0,T;L^2)$ $\widetilde{\PP}-$a.s. In the sequel, for the sake of clarity, we do not write any more the tildes.\medskip

\noindent Note that, with $(\ref{gepsbound})$, we can also suppose that $\eps^{-1} g^{\eps}$ converges to some $g$ weakly in  the space $L^2(\Omega;L^2(0,T;\LL))$. Similarly, with \eqref{mbound}, we assume that $m^{\eps}$ converges to m weakly in $L^2(\Omega;L^2(0,T;\LL))$. Before going on the proof, we want to identify the weak limit $g$ of $\eps^{-1} g^{\eps}$.

\begin{lemma}\label{lemmag}In $L^2(\Omega;L^2(0,T;L^2))$, we have the relation
$$\li{a(v)g}=-\sigma(\rho)^{-1}K\nabla_x\rho.$$
\end{lemma}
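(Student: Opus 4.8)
The plan is to derive the claimed relation as the $\eps\to0$ limit of an exact identity read off from the kinetic equation \eqref{rt}. Since $L\fe=\li{\fe}F-\fe=-\gep$, equation \eqref{rt} may be rewritten, after multiplication by $\eps$, as
$$\frac{1}{\eps}\sigma(\re)\gep=-\eps\,\partial_t\fe-a(v)\cdot\nabla_x\fe+\fe m^{\eps}.$$
Multiplying by $a(v)$ and averaging over $V$, using the decomposition $\fe=\re F+\gep$, the null flux hypothesis \eqref{nullflux} (so that $\li{a(v)\fe}=\li{a(v)\gep}$) and the definition \eqref{defK} of $K$, I obtain
$$\sigma(\re)\,\li{\tfrac{1}{\eps}a(v)\gep}=-\eps\,\partial_t\li{a(v)\fe}-K\nabla_x\re-\li{a(v)\,(a(v)\cdot\nabla_x\gep)}+m^{\eps}\,\li{a(v)\gep}.$$
I then test this identity against $\beta(\omega,t)\psi(x)$ with $\psi\in C^{\infty}(\T^N;\R^N)$ and $\beta$ bounded, integrate in $(t,x)$, take expectation, and move every time and space derivative onto the smooth test function by integration by parts, so that only $L^2$-bounds on $\fe$ and $\gep$ are needed. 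By equality in law the tested identity transfers to the Skorokhod variables.

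Next I let $\eps\to0$ term by term on the right-hand side. The first term is $O(\eps)$ by the uniform bound \eqref{L2bound} and vanishes. For the transport term, the strong convergence $\re\to\rho$ in $L^2(0,T;L^2)$ from Proposition \ref{tightness} gives, after integration by parts, convergence of $\E\int\!\!\int\beta\,\mathrm{div}_x(K\psi)\re$ to $\E\int\!\!\int\beta\,\mathrm{div}_x(K\psi)\rho$, i.e. this term tends to $-\E\int\!\!\int\beta\,\psi\cdot K\nabla_x\rho$. The last two terms vanish: since $\gep=\eps\,(\eps^{-1}\gep)$ and $\eps^{-1}\gep$ is bounded in $L^2(\Omega;L^2(0,T;\LL))$ by \eqref{gepsbound}, one has $\|\gep\|\to0$ in $L^2$, and using $\sup_V|a|<\infty$, $\sup_V F<\infty$ and $|m^{\eps}|\le C_*$, both $\li{a(v)(a(v)\cdot\nabla_x\gep)}$ (after integration by parts) and $m^{\eps}\li{a(v)\gep}$ are controlled by $\|\gep\|$ and tend to $0$.

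The main obstacle, and the reason the strong $L^2(0,T;L^2)$ tightness was proved, is the passage to the limit on the left-hand side, namely
$$\E\!\int_0^T\!\!\!\int_{\T^N}\!\beta\,\sigma(\re)\,\li{\tfrac{1}{\eps}a(v)\gep}\cdot\psi\,\dd x\,\dd t\ \longrightarrow\ \E\!\int_0^T\!\!\!\int_{\T^N}\!\beta\,\sigma(\rho)\,\li{a(v)g}\cdot\psi\,\dd x\,\dd t.$$
This is a product of a weakly convergent factor and a strongly convergent one. Applying the bounded map $h\mapsto\li{a(v)h}$ from $\LL$ to $L^2(\T^N)$ to the weak convergence $\eps^{-1}\gep\rightharpoonup g$ yields $\li{a(v)\eps^{-1}\gep}\rightharpoonup\li{a(v)g}$ weakly in $L^2(\Omega;L^2(0,T;L^2))$. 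On the other hand, by (H1)--(H2) the function $\sigma$ is bounded and Lipschitz, so $\sigma(\re)\to\sigma(\rho)$ almost surely in $L^2(0,T;L^2)$ with $\|\sigma(\re)\|_{\infty}\le\sigma^*$; dominated convergence then gives strong convergence $\beta\,\sigma(\re)\,\psi\to\beta\,\sigma(\rho)\,\psi$ in $L^2(\Omega;L^2(0,T;L^2))$ together with a uniform $L^{\infty}$ bound. A strong-times-weak argument yields the stated limit. This step genuinely uses the strong convergence of $\re$ and would fail with only the $C([0,T],H^{-\eta})$ tightness; it is precisely where the non-linearity of $\sigma$ is overcome.

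Collecting the limits and letting $\psi,\beta$ range over a dense family gives
$$\sigma(\rho)\,\li{a(v)g}=-K\nabla_x\rho\quad\text{in }L^2(\Omega;L^2(0,T;L^2)),$$
and since $\sigma(\rho)\ge\sigma_*>0$ by (H1), dividing by $\sigma(\rho)$ yields $\li{a(v)g}=-\sigma(\rho)^{-1}K\nabla_x\rho$, which is the claim.
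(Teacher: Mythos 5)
Your proof is correct and essentially reproduces the paper's own argument: the paper tests the $\eps$-rescaled equation against $F^{-1}\psi\theta$ and then takes $\theta=a(v)F\xi$, which is exactly your pre-multiplication by $a(v)$ and velocity averaging, and both proofs hinge on the same strong-times-weak pairing of $\eps^{-1}\gep\rightharpoonup g$ against $\sigma(\re)\to\sigma(\rho)$ (the very point of the $L^2(0,T;L^2)$ tightness), followed by the null-flux condition and division by $\sigma(\rho)\geq\sigma_*$; your only cosmetic deviation is killing the noise term by writing $m^{\eps}\li{a(v)\fe}=m^{\eps}\li{a(v)\gep}$ and using $\|\gep\|\to 0$ before passing to the limit, which spares you the weak convergence $m^{\eps}\rightharpoonup \mathrm{m}$ that the paper invokes. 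One small repair: since you integrate by parts in time to show that the term $-\eps\,\partial_t\li{a(v)\fe}$ is $O(\eps)$, your factor $\beta$ cannot be merely bounded but must be smooth and compactly supported in $t$, e.g. $\beta(\omega,t)=\xi(\omega)\alpha(t)$ with $\alpha\in C^{\infty}_c(0,T)$ and $\xi\in L^{\infty}(\Omega)$ (the analogue of the paper's choice $\psi\in C^{\infty}_c((0,T)\times\T^N)$); the density argument at the end is unaffected.
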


\begin{proof} We define $D_T := (0,T)\times \T^N$. Since $\fe$ satisfies equation $(\ref{rt})$, we can write, for any $\psi\in C^{\infty}_c(D_T)$ and $\theta\in L^{\infty}(V\times\Omega;\R^N)$,
\begin{align*}
\E\int_{D_T\times V}\fe F^{-1}\left(-\eps\partial_t\psi-a\cdot\nabla_x\psi\right)\theta &\ =\ \E\int_{D_T\times V}\frac{1}{\eps}\sigma(\li{\fe})L\fe F^{-1}\psi\theta  \\
&+\ \E\int_{D_T\times V}m^{\eps}\fe F^{-1} \psi\theta.
\end{align*}
We recall that we set $\gep:=\fe-\re F$ and that $L\fe = L\gep$ so that we have
\begin{align*}
&\E\int_{D_T\times V}-\eps\fe F^{-1}\partial_t\psi\,\theta-\re a\cdot\nabla_x\psi\,\theta - \gep F^{-1}a\cdot\nabla_x\psi\, \theta \\
& \qquad =\  \E\int_{D_T\times V}\sigma(\re)L(\eps^{-1}\gep) F^{-1} \psi\theta\ +\ \E\int_{D_T\times V}m^{\eps}\fe F^{-1} \psi\theta.
\end{align*}
Since $(\fe)_{\eps>0}$ and $(\eps^{-1}\gep)_{\eps>0}$ are bounded in $L^2(\Omega;L^2(0,T;\LL))$ by \eqref{L2bound} and \eqref{gepsbound}, and with the $\PP-$a.s. convergence $\re\to\rho$ in $L^2(0,T;\LL)$ coupled with the uniform integrability of the family $(\re)_{\eps>0}$ obtained with \eqref{L2bound}, we have that the left-hand side of the previous equality actually converges as $\eps\to 0$ to
$$\E\int_{D_T\times V}-\rho a\cdot\nabla_x\psi\,\theta.$$
Note that, $\PP-$a.s., we have the following convergences in $L^2(0,T;\LL)$
$$\sigma(\re)\to \sigma(\rho),\quad L(\eps^{-1}\gep)\rightharpoonup Lg, \quad \fe\to \rho F, \quad m^{\eps} \rightharpoonup \textrm{m},$$ where the first convergence is justified by the Lipschitz continuity of $\sigma$. As a result, since all the quantities above are uniformly integrable with respect to $\eps$ thanks to \eqref{L2bound}, \eqref{gepsbound} and \eqref{mbound}, the right-hand side of the previous equality converges as $\eps\to 0$ to
$$\E\int_{D_T\times V}\sigma(\rho)L(g) F^{-1} \psi\theta\ +\ \E\int_{D_T\times V}\textrm{m}\rho \psi\theta.$$
Thus, we have
$$\E\int_{D_T\times V}-\rho a\cdot\nabla_x\psi\,\theta\ =\ \E\int_{D_T\times V}\sigma(\rho)L(g) F^{-1} \psi\theta\ +\ \E\int_{D_T\times V}\textrm{m}\rho \psi\theta.$$
Let $\xi$ be an arbitrary bounded measurable function on $\Omega$. We now set $\theta(v,\omega)=a(v)F(v)\xi(\omega)$; note that we do have $\theta\in L^{\infty}(V\times\Omega,\R^N)$. With $(\ref{nullflux})$ and the relation $Lg=\li{g}F-g$, we obtain
$$-\E\int_{D_T\times V}\rho a\cdot\nabla_x\psi\,aF\ =\ -\E\int_{D_T\times V}\sigma(\rho)g a(v)\psi.$$
Since this relation holds for any $\xi \in L^{\infty}(\Omega)$ and $\psi\in C^{\infty}_c(D_T)$, we deduce that $\nabla_x \rho \in L^2(\Omega,L^2(D_T))$ and that
$$\li{a(v)g}=-\sigma(\rho)^{-1}K\nabla_x\rho,$$
and this concludes the proof. 
\end{proof}

\noindent Let $\varphi\in C^3(\LL)$ a good test-function satisfying \eqref{smoothtest}. We define $\varphi^{\eps}$ as in Section \ref{subsec:formaltest}. Since $\varphi^{\eps}$ is a good test-function, we have that
$$\varphi^{\eps}(\fe_t,m^{\eps}_t)-\varphi^{\eps}(\fe_0,m^{\eps}_0)-\int_0^t\mathscr{L}^{\eps}\varphi^{\eps}(\fe_s,m^{\eps}_s)\,\dd s,\quad t\in [0,T],$$
is a continuous martingale for the filtration generated by $(\fe_s)_{s\in[0,T]}$. As a result, if $\Psi$ denotes a continuous and bounded function from $L^2(\T^N)^n$ to $\R$, we have
\begin{equation}\label{avantlimite}
\E\left[\left(\varphi^{\eps}(\fe_t,m^{\eps}_t)-\varphi^{\eps}(\fe_s,m^{\eps}_s)-\int_s^t\mathscr{L}^{\eps}\varphi^{\eps}(\fe_u,m^{\eps}_u)\,\dd u\right)\Psi(\re_{s_1},...,\re_{s_n})\right] = 0,
\end{equation}
for any $0\leq s_1\leq ...\leq s_n\leq s\leq t$. Our final purpose is to pass to the limit $\eps\to 0$ in \eqref{avantlimite}. In the sequel, we assume that the function $\varphi$ and $\Psi$ are also continuous on the space $H^{-\eta}$, which is always possible with an approximation argument: it suffices to consider $\varphi_r:=\varphi((\textrm{I}-r\Delta_x)^{-\frac{\eta}{2}}\cdot)$ and $\Psi_r:=\Psi((\textrm{I}-r\Delta_x)^{-\frac{\eta}{2}}\cdot,...,(\textrm{I}-r\Delta_x)^{-\frac{\eta}{2}}\cdot)$ as $r\to 0$. 
With \eqref{aftertwo}, we divide the left-hand side of \eqref{avantlimite} in four parts. Precisely, we define, for $i\in\{1,...,4\}$
\begin{align*}
&\tau^{\eps}_1:=\varphi^{\eps}(\fe_t,m^{\eps}_t)-\varphi^{\eps}(\fe_s,m^{\eps}_s),\\
& \tau^{\eps}_2:=\int_s^t\mathscr{L}\varphi(\re_u)\,\dd u,\\
& \tau^{\eps}_3:=\int_s^t-\frac{1}{\eps}(A\fe_u,D\varphi(\fe_u))-(\mathrm{div}_x(\sigma(\re_u)^{-1}K\nabla_x\re_u)F,D\varphi(\re_u F))\,\dd u,\\
& \tau^{\eps}_4:=\int_s^t-(A\fe_u,D\varphi_1(\fe_u))-\eps(A\fe_u,D\varphi_2(\fe_u))+\eps(\fe_um^{\eps}_u,D\varphi_2(\fe_u))\, \dd u.
\end{align*}

\textit{Study of $\tau^{\eps}_1$.} We recall that $\varphi^{\eps}(\fe_t,m^{\eps}_t)=\varphi(\re_tF)+\eps\varphi_1(\fe_t,m^{\eps}_t)+\eps^2\varphi_2(\fe_t,m^{\eps}_t)$ so that, with the $\PP-$a.s. convergence of $\re$ to $\rho$ in $C([0,T],H^{-\eta})$ and the bounds $(i)$ of \eqref{f1bounds} and \eqref{f2bounds}, we have that $\tau^{\eps}_1$ converges $\PP-$a.s. to $\varphi(\rho_t F)-\varphi(\rho_s F)$ as $\eps$ goes to $0$. Furthermore, with the continuity of $\Psi$ in $H^{-\eta}$, we also have that $\Psi(\re_{s_1},...,\re_{s_n})$ converges $\PP-$a.s. to $\Psi(\rho_{s_1},...,\rho_{s_n})$. Finally, since the family $\tau^{\eps}_1\Psi(\re_{s_1},...,\re_{s_n})$ is uniformly integrable with respect to $\eps$ thanks to \eqref{smoothtest}, the bounds $(i)$ of \eqref{f1bounds} and \eqref{f2bounds} and the uniform $\LL$ bound \eqref{L2bound}, we have that
$$\E[\tau^{\eps}_1\Psi(\re_{s_1},...,\re_{s_n})]\to \E\left[\left(\varphi(\rho_t F)-\varphi(\rho_s F)\right)\Psi(\rho_{s_1},...,\rho_{s_n})\right].$$

\textit{Study of $\tau^{\eps}_2$.} We recall, with \eqref{limitg}, that
\begin{align*}
\mathscr{L}\varphi(\re_u)=(\mathrm{div}_x(\sigma(\re_u)^{-1}K\nabla_x\re_u)F,D\varphi(\re_u F))-\int_E(\re_u F n\MI,D\varphi(\re_u F))\,d\nu(n)\\
-\int_ED^2\varphi(\re_u F)(\re_u F\MI,\re_u F n)\,d\nu(n).
\end{align*}
Thanks to the $\PP-$a.s. convergence of $\re$ to $\rho$ in $L^2(0,T;L^2)$ and with $\varphi\in C^3(\LL)$, we can pass to the limit $\eps\to 0$ in the term
$$\int_s^t\int_E-(\re_u F n\MI,D\varphi(\re_u F))-D^2\varphi(\re_u F)(\re_u F\MI,\re_u F n)\,d\nu(n)\,\dd u.$$
Regarding the first term of $\mathscr{L}\varphi(\re_u)$, we introduce
$$G(\rho):=\int_0^{\rho}\frac{\dd y}{\sigma(y)},$$
which is, thanks to the hypothesis (H1) made on $\sigma$, Lipschitz continuous on $L^2(\T^N)$. Now the first term of $\mathscr{L}\varphi(\re_u)$ writes
$$(\mathrm{div}_x(\sigma(\re_u)^{-1}K\nabla_x\re_u)F,D\varphi(\re_u F))=(\mathrm{div}_x\nabla_x G(\re_u)F,D\varphi(\re_u F)).$$
Furthermore, with \eqref{smoothtest}, the mapping $\rho\mapsto\partial^2_{x_i,x_j}D\varphi(\rho F)$ is continuous on $L^2(\T^N)$. As a result, we can now pass to the limit in the term
$$\int_s^t(\mathrm{div}_x(\sigma(\re_u)^{-1}K\nabla_x\re_u)F,D\varphi(\re_u F))\,\dd u.$$
To sum up, we obtain that $\tau^{\eps}_2$ converges $\PP-$a.s. to $\int_s^t\mathscr{L}\varphi(\rho_u)\,\dd u$ as $\eps$ goes to $0$. Finally, since the family $\tau^{\eps}_2\Psi(\re_{s_1},...,\re_{s_n})$ is uniformly integrable with respect to $\eps$ thanks to \eqref{smoothtest} and the uniform $\LL$ bound \eqref{L2bound}, we have that
$$\E[\tau^{\eps}_2\Psi(\re_{s_1},...,\re_{s_n})]\to \E\left[\left(\int_s^t\mathscr{L}\varphi(\rho_u)\,\dd u\right)\Psi(\rho_{s_1},...,\rho_{s_n})\right].$$

\textit{Study of $\tau^{\eps}_3$.} First of all, we observe that, with the decomposition $\fe=\re F+g^{\eps}$, \eqref{moyenformal} and \eqref{nullflux},
$$-\eps^{-1}(A\fe_u,D\varphi(\fe_u))=-\eps^{-1}(A\gep_u,D\varphi(\fe_u)),$$
so that, with the $\PP-$a.s. convergences in $L^2(0,T;L^2)$
$$\eps^{-1}g^{\eps}\rightharpoonup g,\quad \re\to\rho,$$
and the continuity of the mapping $\rho\mapsto AD\varphi(\rho F)$ thanks to \eqref{smoothtest}, we obtain that the first term of $\tau^{\eps}_3$ converges $\PP-$a.s. to 
$$-\int_s^t(\li{Ag_u}F,D\varphi(\rho_u F))\,\dd u.$$ 
And, with Lemma \ref{lemmag}, this term writes
\begin{equation}\label{fortau3}
\int_s^t(\mathrm{div}_x(\sigma(\rho_u)^{-1}K\nabla_x\rho_u)F,D\varphi(\rho_u F))\,\dd u.
\end{equation}
Furthermore, similarly as the case of $\tau_2^{\eps}$, we have that the second term of $\tau_{3}^{\eps}$ converges $\PP-$a.s. to the opposite of \eqref{fortau3}. As a result, $\tau_3^{\eps}$ converges $\PP-$a.s. to $0$. Finally, since the family $\tau^{\eps}_3\Psi(\re_{s_1},...,\re_{s_n})$ is uniformly integrable with respect to $\eps$ thanks to \eqref{smoothtest}, the uniform $\LL$ bound \eqref{L2bound} and the bound \eqref{gepsbound} on $(\eps^{-1}g^{\eps})_{\eps>0}$, we have that
$$\E[\tau^{\eps}_3\Psi(\re_{s_1},...,\re_{s_n})]\to 0.$$

\textit{Study of $\tau^{\eps}_4$.} If we transform the two first terms of $\tau_4^{\eps}$ exactly as we do for the first term of $\tau_3^{\eps}$, it is then easy, using the uniform bounds \eqref{L2bound} and \eqref{gepsbound} and the bounds $(ii)$ of \eqref{f1bounds} and \eqref{f2bounds}, to get
$$\E[\tau^{\eps}_4\Psi(\re_{s_1},...,\re_{s_n})]=O(\eps).$$
To sum up, we can pass to the limit $\eps\to 0$ in \eqref{avantlimite} to obtain
\begin{equation}
\E\left[\left(\varphi(\rho_t F)-\varphi(\rho_s F)-\int_s^t\mathscr{L}\varphi(\rho_u)\,\dd u\right)\Psi(\rho_{s_1},...,\rho_{s_n})\right] = 0.
\end{equation}
We recall that this is valid for all $n\in\N$, $0\leq s_1\leq...\leq s_n\leq s\leq t \in[0,T]$ and all $\Psi$ continuous and bounded function on $L^2(\T^N)^n$. Now, let $\xi$ be a smooth function on $L^2(\T^N)$. We choose $\varphi(f)=(f,\xi F)$. We can easily verify that $\varphi$ and $|\varphi|^2$ belong to $C^3(\LL)$ and that they are good test-function satisfying \eqref{smoothtest}. Thus, we obtain that
\begin{align*}
&N_t:=\varphi(\rho_t F)-\varphi(\rho_0 F)-\int_0^t\mathscr{L}\varphi(\rho_u)\,\dd u,\quad t\in[0,T],\\
&|\varphi|^2(\rho_t F)-|\varphi|^2(\rho_0 F)-\int_0^t\mathscr{L}|\varphi|^2(\rho_u)\,\dd u,\quad t\in[0,T],
\end{align*} 
are continuous martingales with respect to the filtration generated by $(\rho_s)_{s\in[0,T]}$. It implies (see appendix 6.9 in \cite{fgps}) that the quadratic variation of $N$ is given by
$$\langle N\rangle_t=\int_0^t\left[\mathscr{L}|\varphi|^2(\rho_u)-2\varphi(\rho_u)\mathscr{L}\varphi(\rho_u)\right]\,\dd u,\quad t\in[0,T].$$
Furthermore, we have
\begin{align*}
\mathscr{L}|\varphi|^2(\rho_u)-2\varphi(\rho_u)\mathscr{L}\varphi(\rho_u)&=-2\int_E(\rho_u F n,\xi F)(\rho_u F \MI,\xi F)\,\dd\nu(n)\\
&=2\E\int_0^{\infty}(\rho_u F m_0,\xi F)(\rho_u F m_t,\xi F)\,\dd t\\
&=\E\int_{\R}(\rho_u F m_0,\xi F)(\rho_u F m_t,\xi F)\,\dd t\\
&=\int_{\T^N}\int_{\T^N}\rho_u(x)\xi(x)\rho_u(y)\xi(y)k(x,y)\,\dd x \dd y \\
&=\|\rho_u Q^{\frac{1}{2}}\xi\|^2_{L^2}.
\end{align*}
This is valid for all smooth function $\xi$ of $L^2(\T^N)$ so we deduce that
$$M_t:=\rho_t-\rho_0-\int_0^t \mathrm{div}_x(\sigma(\rho_s)^{-1}K\nabla_x\rho_s)\,\dd s-\int_0^t \rho_s H\,\dd s,\quad t\in[0,T],$$
is a martingale with quadratic variation
$$\int_0^t \rho_s Q^{\frac{1}{2}}\left(\rho_sQ^{\frac{1}{2}}\right)^*\,\dd s.$$
Thanks to martingale representation Theorem, see \cite[Theorem 8.2]{daprato}, up to a change of probability space, there exists a cylindrical Wiener process $W$ such that 
$$\rho_t-\rho_0-\int_0^t\mathrm{div}_x(\sigma(\rho_s)^{-1}K\nabla_x\rho_s)\,\dd s-\int_0^t \rho_s H\,\dd s=\int_0^t\rho_s Q^{\frac{1}{2}}\,\dd W_{s}, \quad t\in[0,T].$$
This gives that $\rho$ has the law of a weak solution to the equation \eqref{stochasticeq} with paths in $C([0,T],H^{-\eta})\cap L^2(0,T;L^2)$. Since this equation has a unique solution with paths in the space $C([0,T],H^{-\eta})\cap L^2(0,T;L^2)$, and since pathwise uniqueness implies uniqueness in law, we deduce that $P$ is the law of this solution and is uniquely determined. Finally, by the uniqueness of the limit, the whole sequence $(P^{\eps})_{\eps>0}$ converges to $P$ weakly in the spaces of probability measures on $C([0,T],H^{-\eta})$ and $L^2(0,T;L^2)$. This concludes the proof of Theorem \ref{mainresult}.

\nocite{degond}
\nocite{debouard}

\bibliography{biblio}
\end{document}